\gdef\nz{{\mathbb N}} 
\gdef\rz{{\mathbb R}} 
\newcommand{\nm}{\{1,\ldots,n\}}
\newtheorem{theorem}{Theorem}
\newtheorem{proposition}{Proposition}
\newtheorem{lemma}{Lemma}
\newtheorem{remark}{Remarks}
\newtheorem{remarksingle}[remark]{Remark}
\newtheorem{corollary}{Corollary}
\theoremstyle{plain}
\theoremstyle{definition} 
\theoremstyle{remark}
\begin{document}

\begin{frontmatter}
\pretitle{Research Article}

\title{On the size of the block of 1 for \texorpdfstring{$\varXi
$}{Xi}-coalescents with dust}




\author[a]{\inits{F.}\fnms{Fabian}~\snm{Freund}\thanksref{cor1}\ead[label=e1]{fabian.freund@uni-hohenheim.de}}
\thankstext[type=corresp,id=cor1]{Corresponding author.}
\author[b]{\inits{M.}\fnms{Martin}~\snm{M\"ohle}\ead[label=e2]{martin.moehle@uni-tuebingen.de}}

\address[a]{Crop Plant Biodiversity and Breeding Informatics Group (350b),\\
Institute of Plant Breeding, Seed Science and Population Genetics,\\
\institution{University of Hohenheim},\\
Fruwirthstrasse 21, 70599 Stuttgart, \cny{Germany}}
\address[b]{Mathematical Institute,\\
\institution{Eberhard Karls University of T\"ubingen},\\
Auf der Morgenstelle 10, 72076 T\"ubingen, \cny{Germany}}

\markboth{F. Freund, M. M\"ohle}{On the size of the block of 1 for $\varXi
$-coalescents with dust}

%
\begin{abstract}
We study the frequency process $f_1$ of the block of 1 for a
$\varXi$-coalescent $\varPi$ with dust. If $\varPi$ stays infinite, $f_1$
is a jump-hold process which can be expressed as a sum of broken
parts from a stick-breaking procedure with uncorrelated, but in
general non-independent, stick lengths with common mean. For
Dirac-$\varLambda$-coalescents with $\varLambda=\delta_p$,
$p\in[\frac{1}{2},1)$, $f_1$ is not Markovian, whereas its jump
chain is Markovian. For simple $\varLambda$-coalescents the
distribution of $f_1$ at its first jump, the asymptotic frequency
of the minimal clade of 1, is expressed via conditionally
independent shifted geometric distributions.
\end{abstract}
\begin{keywords}
\kwd{$\varXi$-coalescent}
\kwd{coalescent with dust}
\kwd{Poisson point process}
\kwd{minimal clade}
\kwd{exchangeability}
\end{keywords}
\begin{keywords}[2010]
\kwd{60F15}
\kwd{60J75}
\kwd{60G55}
\kwd{60G09}
\end{keywords}

\received{\sday{28} \smonth{8} \syear{2017}}
\revised{\sday{4} \smonth{12} \syear{2017}}
\accepted{\sday{6} \smonth{12} \syear{2017}}
\publishedonline{\sday{27} \smonth{12} \syear{2017}}
\end{frontmatter}

\section{Introduction and results}
Independently introduced in \cite{Schweinsberg2000} and
\cite{Moehle2001}, $\varXi$-coalescents are exchangeable Markovian
processes $\varPi=(\varPi_t)_{t\geq0}$ on the set of partitions of
$\nz:=\{1,2,\ldots\}$ whose transitions are due to mergers of
partition blocks. The distribution of $\varPi$ is characterised by a
finite measure $\varXi$ on the infinite simplex
\[
\varDelta:=\bigl\{x=(x_1,x_2,\ldots):x_1\geq
x_2\geq\cdots\geq0, |x|\leq1\bigr\},
\]
where $|x|:=\sum_{i\in\nz}x_i$. We exclude $\varXi=0$,
since it leads to a coalescent without coalescence events.
$\varXi$-coalescents allow that disjoint subsets of
blocks merge into distinct new blocks, hence they are
also called coalescents with simultaneous multiple mergers.
If $\varXi$ is concentrated on $[0,1]\times\{0\}\times\{0\}\times\cdots$,
only a single set of blocks is allowed to merge. Such
a coalescent is a $\varLambda$-coalescent, see \cite{Pitman1999}.
In this case, $\varLambda$ is a finite measure on $[0,1]$,
the restriction of $\varXi$ on the first coordinate of $\varDelta$.
The restriction $\varPi^{(n)}$ of $\varPi$ on $[n]:=\nm$ is called the
$\varXi$-$n$-coalescent. Denote the blocks of $\varPi_t$
by $(B_i(t))_{i\in\nz}$, where $i$ is the least element
of the block (we set $B_i(t)=\emptyset$ if $i$ is not a least element
of a block). Clearly, $1\in B_1(t)$. We call $B_1(t)$ the block of
1 at time $t$. Due to the exchangeability of the $\varXi$-coalescent,
Kingman's correspondence ensures that, for every $t\ge0$, the
asymptotic frequencies
\begin{equation}
\label{def:asymptfreq} f_i(t):=\lim_{n\to\infty}\frac{|B_i(t)\cap[n]|}{n},
\quad i\in\nz,
\end{equation}
exist almost surely, where $|A|$ denotes the cardinality of the set $A$.

The family of $\varXi$-coalescents is a diverse class of
processes with very different properties, see e.g. the review
\cite{gnedin2014lambda} for $\varLambda$-coalescents. We will focus on
$\varXi$-coalescents with dust, i.e. $\varXi$ fulfils (see \cite{Schweinsberg2000})
\begin{equation}
\label{def:dust} \mu_{-1}:=\int_{\varDelta} |x|
\nu_0(dx)<\infty,
\end{equation}
where $\nu_0(dx)=\varXi(dx)/(x,x)$ with
$(x,x):=\sum_{i\in\nz} x_i^2$ for $x=(x_1,x_2,\ldots)\in\varDelta$.
These coalescents are characterised by a non-zero probability that,
at any time $t$, there is a positive fraction of $\nz$, the dust,
that has not yet merged. Note that $i\in\nz$ is part of
the dust at time $t$ if and only if $\{i\}$ is a block at time $t$,
which is called a singleton block. The asymptotic
frequency of the dust component is
$S_t:=1-\sum_{i\in\nz} f_i(t)$. Having dust is equivalent to
$P(S_t>0)>0$ for all $t>0$.
We are interested in $\varXi$-coalescents which stay infinite, i.e. which
almost surely have an infinite number of blocks for each $t>0$. We will
put some further emphasis on simple $\varLambda$-coalescents satisfying
\begin{equation}
\label{def:simpleLambda} \mu_{-2}:=\int_{[0,1]}x^{-2}
\varLambda(dx)<\infty.
\end{equation}
This class includes Dirac coalescents with $\varLambda=\delta_p$, the
Dirac measure in $p\in(0,1]$.
Consider the frequency process $f_1:=(f_1(t))_{t\geq0}$ of the block
of 1. For $\varLambda$-coalescents, Pitman characterises $f_1$ as follows
(reproduced from \cite{Pitman1999}, adjusted to our notation).
\begin{proposition}\cite[Proposition 30]{Pitman1999}\label{prop:pitm30}
No matter what $\varLambda$, the process $f_1$ is an increasing pure jump
process with c\`adl\`ag paths,
$f_1(0)=0$ and $\lim_{t\to\infty}f_1(t)=1$.
If $\mu_{-1}=\infty$ then almost surely $f_1(t)>0$ for all $t>0$ and
$\lim_{t\searrow0}f_1(t)=0$. If
$\mu_{-1}<\infty$ then $f_1$ starts by holding at zero until an
exponential time with rate $\mu_{-1}$, when it enters $(0,1]$ by a
jump, and proceeds thereafter by
a succession of holds and jumps, with holding rates bounded above by
$\mu_{-1}$.
\end{proposition}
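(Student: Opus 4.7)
My approach is to use Pitman's Poisson point process (PPP) construction of the $\varLambda$-coalescent: a PPP $\mathcal{N}$ on $(0,\infty)\times[0,1]$ with intensity $dt\otimes p^{-2}\varLambda(dp)$, at whose atoms $(t,p)$ each extant block is independently retained with probability $p$, and the retained blocks merge. From this, $B_1(s)\subseteq B_1(t)$ for $s\le t$ is immediate, so $f_1$ is non-decreasing with $f_1(0)=0$ (as $B_1(0)=\{1\}$). The c\`adl\`ag pure-jump character then follows from monotonicity together with the PPP structure: $f_1$ is constant between atom times at which $B_1(t-)$ is retained alongside some other block, and Kingman's correspondence identifies each jump size as the asymptotic frequency of the newly absorbed blocks.

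A useful auxiliary input is the pair-coalescence rate: by the restriction property of $\varLambda$-coalescents the pair $\{1,2\}$ coalesces at the constant rate $\varLambda([0,1])$, so by exchangeability
\[
E\bigl[f_1(t)\bigr]=P\bigl(2\in B_1(t)\bigr)=1-e^{-\varLambda([0,1])t}.
\]
Monotone convergence combined with this identity then yields both $f_1(t)\to 1$ as $t\to\infty$ and $E[f_1(t)]\to 0$ as $t\searrow 0$.

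For the dust case $\mu_{-1}<\infty$, I compute the rate at which $\{1\}$ first enters a merger. Given an atom $(t,p)$, the singleton $\{1\}$ is retained with probability $p$; the dust property guarantees infinitely many other blocks exist at all times, and almost surely at least one of them is also retained. Integrating over $p$, the rate is $\int_{[0,1]}p\cdot p^{-2}\varLambda(dp)=\mu_{-1}$, making $f_1$'s first jump time $\mathrm{Exp}(\mu_{-1})$-distributed. After this first jump, each atom still retains $B_1(t-)$ with probability $p$, so the same integral bounds the rate of subsequent mergers involving $B_1$, and hence the subsequent holding rates of $f_1$, from above by $\mu_{-1}$.

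For the non-dust case $\mu_{-1}=\infty$, the identical calculation gives first-merger rate $+\infty$, so $\{1\}$'s first merger happens at time $0$ almost surely, yielding $f_1(t)>0$ for every $t>0$. The vanishing of $E[f_1(t)]$ as $t\searrow 0$, combined with monotonicity of $f_1$, forces $\lim_{t\searrow 0}f_1(t)=0$ a.s. The main technical obstacle will be the rigorous verification of the c\`adl\`ag/pure-jump property in the non-dust regime, where infinitely many small jumps accumulate near $t=0$ and Kingman's correspondence must be applied carefully so that each merger produces a well-defined jump of $f_1$ whose sizes aggregate monotonically to $f_1(t)$.
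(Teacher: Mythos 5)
The paper does not prove this proposition: it is quoted verbatim from Pitman's Proposition 30 and used as an external input (``reproduced from [Pitman1999], adjusted to our notation''), so there is no internal proof to compare yours against. Your Poisson-construction sketch is nonetheless the standard route, and it is essentially the machinery the paper itself assembles (Lemma~\ref{lem:PPPof1}, Lemma~\ref{lem:CDI}, and the proof of Theorem~\ref{thm:blockof1Xi}) in order to extend this proposition to $\varXi$-coalescents with dust. In particular, your identity $E(f_1(t))=1-e^{-\varLambda([0,1])t}$ combined with monotone convergence to get $f_1(t)\to1$, and the computation $\int_{[0,1]}p\cdot p^{-2}\varLambda(dp)=\mu_{-1}$ for the first holding rate, are exactly the arguments that reappear there.

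Two points in your sketch are genuine gaps rather than routine verifications. First, the claim that ``the dust property guarantees infinitely many other blocks exist at all times'' is false when $\varLambda(\{1\})>0$: the coalescent then collapses to a single block at an a.s.\ finite time. What is true, and what you actually need, is that infinitely many blocks are present up to the first merger of $1$; this is precisely the content of the paper's Lemma~\ref{lem:CDI}. You also need Kingman's dichotomy (each block of an exchangeable partition is a.s.\ either a singleton or of strictly positive frequency) to conclude that a retention point at which $B_1(t-)$ and a positive fraction of the other blocks are retained produces a strictly positive jump of $f_1$ rather than a null event. Second, you correctly identify that the c\`adl\`ag/pure-jump property in the regime $\mu_{-1}=\infty$ is the real technical content: in the dust case it is immediate because the retention times of $B_1$ form a rate-$\mu_{-1}$ Poisson process with a.s.\ finitely many points in bounded intervals, but for $\mu_{-1}=\infty$ these times are dense near $0$ and one must control the accumulation of infinitely many small jumps; your proposal flags this but does not supply it, and that step is where Pitman's actual proof does its work. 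Since the paper only invokes the proposition in the dust case, and your argument is complete there once the Lemma~\ref{lem:CDI}-type input is added, this unresolved step does not affect anything the paper builds on top of the proposition.
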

%
Moreover, in \cite[Section~3.9]{Pitman1999}, a general formula for the
moments of $f_1(t)$ for fixed $t>0$ is provided.

For two particular coalescents without dust, further properties of
$f_1$ are known. For Kingman's $n$-coalescent ($\varLambda=\delta_0$), the
complete distribution of block sizes is explicitly known, see \cite
[Theorem 1]{Kingman1982b}, from which one can derive some properties of
the block of 1 due to exchangeability. For the Bolthausen--Sznitman
coalescent ($\varLambda$ the uniform distribution on $[0,1]$) the block of
1 can be characterised as in \cite[Corollary 16]{Pitman1999}.
For instance, $f_1$ is Markovian for the Bolthausen--Sznitman coalescent.

Different specific aspects of the block of 1 have been analysed for
different $\varLambda$/$\varXi$-$n$-coalescents including their asymptotics
for $n\to\infty$.
\begin{itemize}
\item External branch length: The waiting time for the first jump of
the block of 1 in the $n$-coalescent, see e.g.
\cite{Blum2005,Caliebe_2007,Dhersin20131691,freund2009time,janson2011total,M_hle_2010}.
\item Minimal clade size: The size $M_n$ of the block of 1 for the
$n$-coalescent at its first jump. For Kingman's $n$-coalescent and for
$\varLambda$ beta-distributed with parameters $(2-\alpha,\alpha)$ with
$\alpha\in(1,2)$, $X_n$ converges in distribution for $n\to\infty$, see
\cite{Blum2005} and \cite{siri2016asymptotics}. For the
Bolthausen--Sznitman $n$-coalescent, $\log(M_n)/\log(n)$ converges in
distribution \cite{Freund2014}. These results do not cover $\varLambda
$/$\varXi$-coalescents with dust.
\item The number of blocks involved in the first merger of the block of
1, see \cite{siri2016asymptotics}. The results cover $\varLambda
$-coalescents with dust. 
%
\item The number of blocks involved in the last merger of the block of
1, see \cite{abraham2013construction,abraham2015beta,henard2015fixation,goldschmidt2005random,2017arXiv170100549K,mohle2014hitting}.
\item The small-time behaviour of the block of 1, see \cite
{berestycki2008small,siri2016asymptotics}.
\end{itemize}
Due to the exchangeability of the $\varXi$-coalescent, any result
for the distribution of the block of 1 holds true for the block containing
any other $i\in\nz$. We want to further describe $f_1$ for $\varXi$-coalescents
with dust. For any finite measure $\varXi$ on $\varDelta$ which fulfils
$\eqref{def:dust}$, we introduce
\begin{equation}
\label{def:gamma} \gamma:=\frac{\varXi(\varDelta)}{\mu_{-1}}.
\end{equation}
%
We see that $\gamma\in(0,1]$, since
\[
0<\varXi(\varDelta)=\int_{\varDelta} (x,x) \nu_0(dx)\leq
\int_{\varDelta} |x| \nu _0(dx)=\mu_{-1}<
\infty.
\]
Define $\varDelta_f:=\bigcup_{k\in\nz}\{x\in\varDelta:x_1+\cdots+x_k=1\}$.
We extend Proposition \ref{prop:pitm30} for $\varXi$-coalescents with dust
which stay infinite, i.e.~have almost surely infinitely many blocks for each
$t\geq0$ (equivalent to $\varXi(\varDelta_f)=0$, see Lemma \ref{lem:CDI}).
While the extension to $\varXi$-coalescents and the explicit waiting time
distributions are a direct follow-up from Pitman's proof, we provide
a more detailed description of the jump heights of $f_1$. Proposition
\ref{prop:pitm30} ensures that the jumps of $f_1$ are separated by
(almost surely) positive waiting times, we denote the value of $f_1$ at
its $k$th jump with $f_1[k]$ for $k\in\nz$.
\begin{theorem}\label{thm:blockof1Xi}
In any $\varXi$-coalescent $\varPi$ with dust and $\varXi(\varDelta_f)=0$, the
asymptotic frequency process $f_1:=(f_1(t))_{t\geq0}$
of the block of $1$, defined by
Eq.~\eqref{def:asymptfreq}, is an increasing pure jump process with
c\`adl\`ag paths, $f_1(0)=0$ and $\lim_{t\to\infty}f_1(t)=1$,\vadjust{\goodbreak} but
$f_1(t)<1$ for $t>0$ almost surely. The waiting times between almost
surely infinitely many jumps are distributed as independent ${\rm
Exp}(\mu_{-1})$ random variables. Its jump chain $(f_1[k])_{k\in\nz}$
can be expressed via stick-breaking
\begin{equation}
\label{eq:stickbreak} f_1[k]=\sum_{i=1}^{k}X_i
\prod_{j=1}^{i-1}(1-X_j),
\end{equation}
where $(X_j)_{j\in\nz}$ are pairwise uncorrelated, $X_j>0$ almost
surely and $E(X_j)=\gamma$ for all $j\in\nz$. In particular,
$E(f_1[k])=1-(1-\gamma)^k$.
In general, $(X_j)_{j\in\nz}$ are neither independent nor identically
distributed.
\end{theorem}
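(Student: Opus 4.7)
I plan to work with Schweinsberg's Poisson point process (PPP) representation of $\varPi$. Let $N$ be a PPP on $[0,\infty)\times\varDelta\times[0,1]^{\nz}$ with intensity $dt\otimes\nu_0(dx)\otimes\bigotimes_i du_i$; at each atom $(\tau,x,(u_i))$ the blocks present just before $\tau$ are regrouped by placing block $i$ in the $j$th new group iff $u_i$ lies in a fixed subinterval of $[0,1]$ of length $x_j$, and is kept as a singleton if $u_i>|x|$. The block of $1$ participates in a merger iff $u_1\leq|x|$, which by projecting $N$ to $[0,\infty)\times\varDelta\times[0,1]$ is a PPP of intensity $dt\otimes\nu_0(dx)\otimes du_1$ restricted to $\{u_1\leq|x|\}$; integrating out gives total rate $\mu_{-1}$. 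Because $\varXi(\varDelta_f)=0$ forces a.s.\ infinitely many singleton blocks at every time (Lemma~\ref{lem:CDI}), conditional on $u_1\in I_J(x)$ with $x_J>0$ at least one other singleton is absorbed a.s., so each such atom is a genuine jump of $f_1$. Hence the inter-jump times of $f_1$ are i.i.d.\ $\mathrm{Exp}(\mu_{-1})$ and there are a.s.\ infinitely many jumps.

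Set $X_k:=(f_1[k]-f_1[k-1])/(1-f_1[k-1])$, so iterating yields $1-f_1[k]=\prod_{j=1}^k(1-X_j)$ and \eqref{eq:stickbreak} follows by telescoping. The heart of the proof is to show $E(X_k\mid\mathcal{F}_{k-1})=\gamma$, where $\mathcal{F}_{k-1}$ is the $\sigma$-algebra generated by $\varPi$ up to the $(k-1)$-th jump of $f_1$. Let $\tau_k$ be the time of the $k$-th jump and let $(x^{(k)},J_k)$ denote the corresponding PPP parameter together with the group $J_k$ that $1$ is assigned to. Conditionally on $\mathcal{F}_{k-1}$ and $(x^{(k)},J_k)$, each non-$1$ block present at $\tau_k^-$ is absorbed into $1$'s block independently with probability $x^{(k)}_{J_k}$, hence the expected absorbed mass equals $x^{(k)}_{J_k}(1-f_1[k-1])$ and $E(X_k\mid\mathcal{F}_{k-1},x^{(k)},J_k)=x^{(k)}_{J_k}$. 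Since the post-$\tau_{k-1}$ part of $N$ is independent of $\mathcal{F}_{k-1}$, the law of $(x^{(k)},J_k)$ under the conditioning is the size-biased law with density $x_J/\mu_{-1}$ with respect to $\nu_0(dx)\otimes\#_{\nz}$, so
\[
E(X_k\mid\mathcal{F}_{k-1})=\frac{1}{\mu_{-1}}\int_{\varDelta}\sum_{j\in\nz}x_j^2\,\nu_0(dx)=\frac{\int_{\varDelta}(x,x)\nu_0(dx)}{\mu_{-1}}=\frac{\varXi(\varDelta)}{\mu_{-1}}=\gamma.
\]

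From $E(X_k\mid\mathcal{F}_{k-1})=\gamma$ the remaining assertions follow routinely: $E(X_k)=\gamma$, and for $j<k$, $E(X_jX_k)=E(X_jE(X_k\mid\mathcal{F}_{k-1}))=\gamma^2=E(X_j)E(X_k)$, giving pairwise uncorrelatedness. Iterating the same tower on $\prod_{j=1}^k(1-X_j)$ yields $E(1-f_1[k])=(1-\gamma)^k$, so by monotonicity $f_1[k]\to 1$ a.s., which combined with a.s.\ infinitely many jumps gives $f_1(t)\to 1$. Positivity $X_j>0$ a.s.\ follows from $x^{(k)}_{J_k}>0$ a.s.\ together with positivity of the dust fraction $S_{\tau_k^-}$ at every jump (using $P(\{i\}\text{ is a block at time }t)=e^{-\mu_{-1}t}$). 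Strictness $f_1(t)<1$ for $t>0$ comes from $\varXi(\varDelta_f)=0$ ensuring $|x^{(k)}|<1$ a.s., hence $X_k<1$ a.s.\ at each of the a.s.\ finitely many jumps by time $t$. That $(X_j)$ are in general not identically distributed and not independent arises because, for $k\ge 2$, the conditional law of $X_k$ given $(x^{(k)},J_k)$ depends on the full block-size profile of the non-$1$ mass at $\tau_k^-$, which itself depends on the mergers that occurred during $[\tau_{k-1},\tau_k)$; explicit $\varXi$ showing this behaviour can be exhibited in the examples section.

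The main obstacle is the conditional-expectation identity $E(X_k\mid\mathcal{F}_{k-1})=\gamma$. It combines three ingredients that must be handled simultaneously: (i) a law-of-large-numbers argument absorbing exactly a fraction $x^{(k)}_{J_k}$ of the infinite-dust component, which relies on the hypothesis $\varXi(\varDelta_f)=0$; (ii) Palm-type size-biasing of the conditional law of $(x^{(k)},J_k)$ at a jump of $f_1$, obtained by thinning $N$ with the indicator $\{u_1\le|x|\}$; and (iii) the strong Markov-type independence of the post-$\tau_{k-1}$ PPP from $\mathcal{F}_{k-1}$. Once these are aligned, all remaining claims reduce to clean iterated tower-of-conditional-expectations computations.
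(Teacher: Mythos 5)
Your overall route is the same as the paper's: construct $\varPi$ from Schweinsberg's Poisson process, observe that the atoms at which $1$ participates form a rate-$\mu_{-1}$ Poisson process, write the jump chain as a stick-breaking scheme, and obtain $E(X_k)=\gamma$ and $E(X_jX_k)=\gamma^2$ from the size-biased law of the paintbox coordinate of $1$ (your size-biased density $x_j\,\nu_0(dx)/\mu_{-1}$ is exactly the measure $Q$ of Lemma~\ref{lem:PPPof1}) together with independence of the $k$th atom from the past. Packaging this as the single identity $E(X_k\mid\mathcal{F}_{k-1})=\gamma$ is a clean and correct way to get uncorrelatedness and $E(1-f_1[k])=(1-\gamma)^k$ in one stroke, and it matches the paper's computation in substance.

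The genuine gap lies in your treatment of the almost-sure claims $X_k>0$ and $f_1(t)<1$, both of which you reduce to properties that can fail under the stated hypotheses. You argue $X_k>0$ from positivity of the dust $S_{\tau_k-}$ at every jump, and $f_1(t)<1$ from the claim that $\varXi(\varDelta_f)=0$ forces $|x^{(k)}|<1$ almost surely. But $\varDelta_f$ is a proper subset of $\varDelta^*:=\{x\in\varDelta:|x|=1\}$: for instance $x=(2^{-1},2^{-2},\ldots)$ lies in $\varDelta^*\setminus\varDelta_f$, and $\varXi=\delta_x$ satisfies all hypotheses of the theorem. For such a $\varXi$ the dust vanishes at a finite exponential time (every singleton is painted at the first atom whose paintbox lies in $\varDelta^*$), so $S_{\tau_k-}=0$ for all $k\geq 2$ and $|x^{(k)}|=1$; note also that $P(\{i\}\mbox{ is a block at time }t)=e^{-\mu_{-1}t}$ only gives $E(S_t)>0$, not $S_t>0$ almost surely. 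What saves both claims, and what the paper's proof uses, is that staying infinite together with $\varXi(\varDelta_f)=0$ guarantees that at every jump time either $S_{\tau_k-}>0$ or there are infinitely many non-singleton blocks of positive frequency outside the block of $1$; each of these is absorbed independently with probability $x^{(k)}_{J_k}\in(0,1)$ (the relevant fact is $x^{(k)}_{J_k}<1$, not $|x^{(k)}|<1$), so almost surely a positive amount of frequency is absorbed and a positive amount is left over. This repair also matters for your waiting-time claim: the assertion that every atom with $u_1\leq|x|$ is a genuine jump of $f_1$ requires absorbed \emph{frequency}, not merely absorbed singletons, since infinitely many absorbed singletons of zero total density would not move $f_1$.
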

\begin{remarksingle}
From Theorem \ref{thm:blockof1Xi}, the dependence between $f_1$ and its
jump times is readily seen as follows.
Recall \cite[Eq.~(51)]{Pitman1999} that
$E(f_1(t))=1-e^{-t}$ for any $\varLambda$-coalescent with $\varLambda([0,1])=1$.
If we would have independence, integrating
$E(f_1(t))$ over the waiting time distribution
${\rm Exp}(\mu_{-1})$ for the first jump of $f_1$
would yield $E(f_1[1])=(1+\mu_{-1})^{-1}$, in contradiction to
$E(f_1[1])=1/\mu_{-1}$ by Theorem \ref{thm:blockof1Xi}.
\end{remarksingle}
Dirac coalescents ($\varLambda=\delta_p$ for some $p\in(0,1]$)
are a family of $\varLambda$-coalescents with dust. They
have been introduced as simplified models
for populations in species with skewed offspring distributions
(reproduction sweepstakes), see \cite{Eldon2006}. Their jump chains
(discrete time Dirac coalescents)
can also arise as large population size limits in conditional
branching process models \cite[Theorem 2.5]{Huillet2015}.

We further characterise $f_1$ as follows,
including an explicit formula for its distribution at its first
jump.
\begin{proposition}\label{cor:Dirac}
Let $\varLambda=\delta_p$, $p\in[\frac{1}{2},1)$ and
$q:=1-p$. $f_1$ takes values in the set
\begin{equation}
\label{eq:unique} \mathcal{M}_p:=\Biggl\{\sum
_{i\in\nz} b_i pq^{i-1} : b_i
\in\{0,1\},1\leq\sum_{i\in\nz} b_i <
\infty\Biggr\}.
\end{equation}
For $x=\sum_{i\in\nz}b_i pq^{i-1}\in\mathcal{M}_p$, we have
\begin{equation}
\label{eq:distXDirac} P\bigl(f_1[1]=x\bigr)=pq^{j-1} \prod
_{i\in J\setminus\{j\}}P(Y+i\in J)\prod_{i\in[j-1]\setminus J} P(Y+i
\notin J)>0,
\end{equation}
where $Y\stackrel{d}{=}Geo(p)$, $J:=\{i\in\nz|b_i=1\}$ and $j:=\max J$.
The process $f_1$ is not Markovian
whereas its jump chain $(f_1[k])_{k\in\nz}$ is Markovian.
\end{proposition}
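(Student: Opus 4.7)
The plan is to exploit the event-based construction of the $\varLambda=\delta_p$-coalescent: events arrive at the points of a rate-$1/p^2$ Poisson process, and at each event every block is independently marked with probability $p$ with marked blocks merging. Let $K$ denote the first event index at which the block of $1$ is marked; by independence of the marks $K\sim\mathrm{Geo}(p)$. For every $i<K$ the dust is reduced by the factor $q$, so the newly consumed dust at event $i$ forms a ``layer'' of asymptotic mass $pq^{i-1}$. At event $K$, $B_1$ absorbs layer $K$ together with every outside block marked at event $K$; since each outside block is a union of earlier layers, $f_1[1]=\sum_{i\in J}pq^{i-1}$ with $\max J=K$, and the analogous argument applied at the later mergers of $B_1$ gives $f_1[k]\in\mathcal M_p$ for every $k\in\nz$.

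To obtain the distribution formula, I attach to every layer $i$ the variable $Y_i:=T_i-i$, where $T_i$ is the first event after $i$ at which the block containing layer $i$ is marked. Since at every event each existing block carries a fresh independent $\mathrm{Bernoulli}(p)$ mark, the $Y_i$'s are iid $\mathrm{Geo}(p)$ and independent of $K$. Writing $j:=K$, the ``anchor chain'' $i\to T_i\to T_{T_i}\to\cdots$ reaches $j$ precisely when $i\in J$; this yields the recursion
\[
\mathbf{1}\{i\in J\}=\mathbf{1}\{T_i=j\}+\mathbf{1}\{T_i<j\}\mathbf{1}\{T_i\in J\},
\]
which on the full joint event $\{\mathbf{1}\{i\in J\}=\mathbf{1}\{T_i\in J\}\text{ for all }i\in[j-1]\}$ simplifies to the pointwise conditions $T_i\in J\Leftrightarrow i\in J$. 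Independence of the $T_i$'s then factorises the conditional probability, and since $P(T_i\in J)=P(Y+i\in J)$ for $Y\sim\mathrm{Geo}(p)$, multiplying by $P(K=j)=pq^{j-1}$ yields \eqref{eq:distXDirac}; each factor is strictly positive.

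For $p\in[\tfrac12,1)$ one checks $pq^{k-1}>\sum_{i>k}pq^{i-1}=q^k$ (strictly for $p>\tfrac12$, and the equality at $p=\tfrac12$ is harmless for finite sums), so every $x\in\mathcal M_p$ admits a unique representation and hence $f_1[k]$ determines the cumulative layer set $J^{(k)}$. To establish the Markov property of the jump chain, I will show that the conditional distribution of the block partition of $\nz\setminus B_1$ just after the $k$-th merger of $B_1$, given $(f_1[1],\ldots,f_1[k])$, depends only on $f_1[k]$. The key observation is that for any admissible sequence of merger-event indices $(E_1,\ldots,E_k)\subseteq J^{(k)}$, the extra conditioning ``outside blocks are not marked at the $E_l$'s'' acts uniformly on every compatible outside configuration and therefore cancels in the normalised conditional distribution, as a direct enumeration shows (already visible in the case $J=\{1,4\}$). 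Combined with the independence of the future marking events, this yields the Markov property of $(f_1[k])_{k\in\nz}$.

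For the non-Markovianity of $f_1$ itself, observe that although the waiting time to the next jump of $f_1$ is an $\mathrm{Exp}(\mu_{-1})$-variable memoryless with respect to $f_1(t)$, the height of that next jump depends on the block partition of $\nz\setminus B_1$, which is sensitive to the elapsed time since the last jump through the continued coalescing of outside blocks between consecutive mergers. Since $f_1(t)=s$ does not reveal the time of the last jump while the full filtration $\mathcal F_t$ does, the conditional law of $f_1(t+u)$ genuinely differs between $\mathcal F_t$ and $\sigma(f_1(t))$. The main obstacle of the plan will be the path-invariance step for the jump chain, which requires enumerating all $(E_1,\ldots,E_k)$ consistent with a given $J^{(k)}$ and verifying that the induced distributions on the outside partition coincide.
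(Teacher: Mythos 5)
Your construction (rate-$p^{-2}$ event process, independent Bernoulli($p$) marks, layers $\mathcal{S}_i$ of mass $pq^{i-1}$, anchor variables $T_i$ with $T_i-i\sim\mathrm{Geo}(p)$, uniqueness of the representation in $\mathcal{M}_p$ for $p\ge\frac12$) is exactly the paper's route, and your derivation of \eqref{eq:distXDirac} matches the paper's (which obtains it by specialising Proposition~\ref{prop:f1simple} with $\alpha=p$ and invoking Lemma~\ref{lem:unique}). However, there are two genuine gaps. First, your argument for the Markov property of the jump chain rests on the claim that the conditioning ``outside blocks are not marked at the $E_l$'s'' \emph{acts uniformly on every compatible outside configuration and therefore cancels}. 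As stated this is false: the probability that no outside block is marked at event $E_m$ is $q^{N_m}$ where $N_m$ is the number of outside blocks present, so the conditioning reweights outside configurations non-uniformly. What saves the argument --- and what the paper actually uses --- is that the constraint imposed on the outside layers by the \emph{entire} history $f_1[1],\ldots,f_1[l]$ is exactly $\{T_i\notin J\ \text{for all}\ i\notin J,\ i<\max J\}$, an event measurable with respect to the anchors of the outside layers alone and depending only on the final set $J=J^{(l)}$, not on its decomposition into increments; since inside and outside anchors are independent, the conditional law of the outside partition given the history is a function of $f_1[l]$ only. You flag this step as the ``main obstacle'', but the fix is not the enumeration you propose --- it is this factorisation of the conditioning event, and without it the step does not go through.

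Second, your treatment of the non-Markovianity of $f_1$ is only a plausibility argument: you identify a mechanism (the elapsed time since the last jump carries information about how many $\mathcal{P}_2$-events have further coalesced the outside blocks) but you never exhibit two conditionings under which the conditional laws actually differ. A mechanism for dependence does not preclude the dependence washing out, so this does not constitute a proof. The paper closes this by an explicit computation: it evaluates $P(f_1(t_2)=p+pq^2\mid f_1(t_1)=p,\ f_1(t_0)=0)$ via the independent exponential waiting times of $\mathcal{P}_1$ (rate $\mu_{-1}$) and $\mathcal{P}_2$ (rate $\mu_{-2}-\mu_{-1}$), and shows the result contains a non-vanishing term proportional to $t_1-t_0$, hence genuinely depends on $t_0$. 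Some such explicit verification (or an equivalent exhibited discrepancy) is needed to complete your argument.
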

\begin{remark}\
\begin{itemize}
\item The law of $f_1[1]$ is a discrete
measure on $[0,1]$ for Dirac coalescents. Surprisingly
different properties arise for different values of $p$.
For instance, $\mathcal{M}_{2/3}=\{\mbox{$\sum_{i\in\nz}$}
b_i 3^{-i} : b_i\in\{0,2\},1\leq\mbox{$\sum_{i\in\nz}$}b_i < \infty\}$
is a subset of the ternary Cantor set which is nowhere dense in $[0,1]$,
whereas $\mathcal{M}_{1/2}$, the set of all $x\in[0,1]$ with finite
2-adic expansion, is dense in $[0,1]$.
\item We omitted $f_1[1]$ for the star-shaped coalescent
($\varLambda=\delta_1$), since it just jumps from $0$ to $1$
at time $T\stackrel{d}{=}{\rm Exp}(1)$.
\item Recall that $f_1$ is Markovian for the Bolthausen--Sznitman
coalescent in contrast to $f_1$ for the Dirac coalescents specified above.
\end{itemize}
\end{remark}
Our key motivation was to provide a more detailed description of the
jump chain of $f_1$, especially properties of the value $f_1[1]$ at the
first jump which is the asymptotic frequency of the minimal clade.
Theorem \ref{thm:blockof1Xi} provides a first-order limit result for
all $\varXi$-coalescents with dust.
\begin{corollary}\label{cor:mcs}
Let $\varPi$ be a $\varXi$-coalescent with dust and $\varPi^{(n)}$ its restriction
on $[n]$. Let $M_n$ be the minimal clade size, i.e.~ the
size of the block of 1 at its first merger in $\varPi^{(n)}$. Then,
$M_n/n\to f_1[1]$ almost surely, $f_1[1]>0$ almost surely and
$E(f_1[1])=\gamma$.
\end{corollary}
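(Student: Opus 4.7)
The plan is to deduce the three assertions of Corollary~\ref{cor:mcs} from Theorem~\ref{thm:blockof1Xi} together with a short comparison between the full coalescent $\varPi$ and its restriction $\varPi^{(n)}$. Specialising the stick-breaking representation~\eqref{eq:stickbreak} to $k=1$ gives $f_1[1]=X_1$, so Theorem~\ref{thm:blockof1Xi} immediately yields $f_1[1]>0$ almost surely and $E(f_1[1])=E(X_1)=\gamma$. Only the almost-sure convergence $M_n/n\to f_1[1]$ requires separate work.

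For that convergence let $\tau$ denote the first jump time of $f_1$ in $\varPi$ and let $\tau_n$ denote the first merger time of the block of $1$ in $\varPi^{(n)}$. My plan is to prove that $\tau_n=\tau$ for all sufficiently large $n$, almost surely; this reduces the claim to the very definition~\eqref{def:asymptfreq} of an asymptotic frequency. The bound $\tau_n\geq\tau$ is immediate, since $B_1(t)=\{1\}$ for every $t<\tau$ and hence $B_1(t)\cap[n]=\{1\}$ as well, so no merger of the block of $1$ can occur in $\varPi^{(n)}$ before time $\tau$. For the matching bound, at time $\tau$ the singleton $\{1\}$ merges with at least one further block $B$ of $\varPi$, and $i^{\ast}:=\min B$ is almost surely finite; as soon as $n\geq i^{\ast}$, the index $i^{\ast}$ lies in $B\cap[n]$, so the restricted process also merges its block of $1$ at time $\tau$, forcing $\tau_n=\tau$.

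Once this alignment is established, $M_n=|B_1(\tau)\cap[n]|$ for every $n\geq i^{\ast}$, and the definition~\eqref{def:asymptfreq} of asymptotic frequency yields $M_n/n=|B_1(\tau)\cap[n]|/n\to f_1(\tau)=f_1[1]$ almost surely as $n\to\infty$. I do not anticipate any substantive obstacle; the only technical point is the alignment $\tau_n=\tau$ eventually, which boils down to the trivial observation that the (almost surely non-empty) set of other blocks merging with $\{1\}$ at time $\tau$ contains at least one block with a finite, hence eventually dominated, minimum label.
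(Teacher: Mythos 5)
Your proof is correct and follows the same overall strategy as the paper's: both arguments reduce the corollary to showing that, for all sufficiently large $n$, the first merger of the block of $1$ in $\varPi^{(n)}$ occurs exactly at the first jump time of $f_1$, after which $M_n/n\to f_1[1]$ is just the definition \eqref{def:asymptfreq} of an asymptotic frequency, and the remaining two claims are the case $k=1$ of Theorem \ref{thm:blockof1Xi}. The one place where you genuinely diverge is the justification of the alignment $\tau_n=\tau$ for large $n$: the paper argues inside the Poisson construction, using that the merger indicators $(1_{\{K^{(1)}_1=K^{(1)}_i\}})_{i\geq 2}$ are exchangeable with almost surely positive de Finetti limit (Lemma \ref{lem:PPPof1}) together with the existence of infinitely many blocks before $T_1$ (Lemma \ref{lem:CDI}), whereas you extract a witness $i^{\ast}$ directly from $f_1[1]>0$, which forces $B_1(\tau)\neq\{1\}$; this is a little more economical. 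Do note, however, that your ``immediate'' step $B_1(t)=\{1\}$ for $t<\tau$ is not purely definitional: a priori the block of $1$ could absorb finitely many elements without $f_1$ jumping, in which case $\tau_n<\tau$ for large $n$ and the corollary would fail. Ruling this out is precisely the content of the argument in the proof of Theorem \ref{thm:blockof1Xi} that every Poisson point of $\mathcal{P}_1$ causes a jump of $f_1$ (so the jump times of $f_1$ exhaust the potential merger times of the block of $1$); you are therefore implicitly citing the proof, not merely the statement, of that theorem, and this citation should be made explicit. With that, the argument is complete.
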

Compared to the known results listed above for the minimal clade size for
dust-free coalescents, the minimal clade size is much larger
asymptotically for $n \to\infty$ ($O(n)$ compared to $o(n)$).

The law of $f_1[1]$ in (\ref{eq:distXDirac})
follows from the following more general description of $f_1[1]$ for
simple $\varLambda$-coalescents.
We introduce, for a finite measure $\varLambda$ on $[0,1]$ with $\mu
_{-1}=\int_0^1 x^{-1} \varLambda(dx)<\infty$,
\begin{equation}
\label{eq:alpha} \alpha:=\frac{\mu_{-1}}{\mu_{-2}}=\frac{\int_0^1 x^{-1} \varLambda
(dx)}{\int_0^1 x^{-2} \varLambda(dx)}.
\end{equation}
We have $\alpha\in[0,1]$ since $x^{-1}\leq x^{-2}$ on $(0,1]$ (if $\mu
_{-1}<\infty$, we have $\varLambda(\{0\})=0$). Additionally, $\alpha>0$ if
and only if $\mu_{-2}<\infty$, so if $\varLambda$ characterises a simple
coalescent (recall that $\mu_{-2}\geq\mu_{-1}>0$ since we exclude
$\varLambda=0$).
\begin{proposition}\label{prop:f1simple}
Let $\varLambda$ fulfil \eqref{def:simpleLambda}. Then,
\begin{equation}
\label{eq:dist_C} f_1[1]=\sum_{i=1}^C
B^{(C)}_i P_i\prod
_{j\in[i-1]}(1-P_j)= \sum_{i\in
\nz}P_i
\prod_{j\in[i-1]}(1-P_j)\sum
_{k\geq i}B_i^{(k)}1_{\{C=k\}},
\end{equation}
where $(P_i)_{i\in\nz}$ are i.i.d.~with $P_i\stackrel{d}{=}\mu
_{-2}^{-1}x^{-2}\varLambda(dx)$. We have
\[
P\bigl(C=k\rvert(P_i)_{i\in\nz}\bigr)=P_k\prod
_{j\in[k-1]}(1-P_j), \mbox{ $C$ is $Geo(
\alpha)$-distributed.}
\]
Given $(P_i)_{i\in\nz}$, $C$ and $(B^{(k)}_i)_{k\in\nz,i\in[k]}$ are
independent and $(B^{(k)}_i)_{k\in\nz,i\in[k]}$ is defined as
\begin{align}
\label{eq:jointdistbi} & P\bigl(\bigl(B^{(j)}_1,
\ldots,B^{(j)}_{j}\bigr)=b|(P_i)_{i\in\nz}
\bigr)
\nonumber
\\
&\quad= \prod_{i\in J\setminus\{j\}}P\bigl(I(i)\in J|(P_i)_{i\in\nz}
\bigr)\prod_{i\in
[j-1]\setminus J} P\bigl(I(i) \notin
J|(P_i)_{i\in\nz}\bigr) \ \ \mbox{ almost surely},
\end{align}
where $b=(b_1,\ldots,b_j)\in\{0,1\}^{j-1}\times\{1\}$, $J:=\{i\in
[j]|b_i=1\}$ and, for each $i\in\nz$, $I(i):=\min\{j\geq
i+1:B^{(j)}_i=1\}$.
We have
\begin{itemize}
\item[(i)] $P(I(i)=i+k|(P_i)_{i\in\nz})=P_{i+k}\prod_{l=i+1}^{i+k-1}(1-P_l)$ almost surely for $k\in\nz$.
\item[(ii)] For any $i\in\nz$, $I(i)-i$ is $Geo(\alpha)$-distributed on
$\nz$. Given $(P_i)_{i\in\nz}$, $(I(i))_{i\in\nz}$ are independent.\vadjust{\goodbreak} 
\end{itemize}
\end{proposition}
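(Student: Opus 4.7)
The plan is to use the standard Poisson point process construction of the $\varLambda$-coalescent. Since $\mu_{-2}<\infty$, the intensity measure $dt\otimes x^{-2}\varLambda(dx)$ on $(0,\infty)\times(0,1]$ has finite total rate $\mu_{-2}$, so I enumerate its arrivals as $(T_k,P_k)_{k\in\nz}$ with inter-arrival times i.i.d.\ $\mathrm{Exp}(\mu_{-2})$ and marks $(P_k)$ i.i.d.\ with law $\mu_{-2}^{-1}x^{-2}\varLambda(dx)$. At arrival $k$, every block present at time $T_k^-$ is independently marked with probability $P_k$, and the marked blocks merge into a new block $U_k$.

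Let $C:=\min\{k:\{1\}\text{ is marked at arrival }k\}$. Conditional on $(P_j)$, the marks of $\{1\}$ are independent Bernoulli$(P_k)$, so $P(C=k|(P_j))=P_k\prod_{j<k}(1-P_j)$; integrating and using $E[P_1]=\mu_{-2}^{-1}\mu_{-1}=\alpha$ yields $C\sim\mathrm{Geo}(\alpha)$. Write $D_i$ for the set of integers $l\geq 2$ that remain singletons through arrivals $1,\ldots,i-1$ and are marked at arrival $i$, so $D_i\subseteq U_i$. Conditional on $(P_j)$, the events $\{l\in D_i\}$ are i.i.d.\ across $l$ with probability $m_i:=P_i\prod_{j<i}(1-P_j)$, so the strong law of large numbers gives $D_i$ almost-sure asymptotic frequency $m_i$.

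Now set $I(i):=\min\{j>i:U_i\text{ is marked at arrival }j\}$, the step at which $U_i$ is absorbed into $U_{I(i)}$, and $B_i^{(k)}:=\mathbf{1}\{k\text{ lies in the orbit of }i\text{ under }I\}$, so that $I(i)=\min\{j\geq i+1:B_i^{(j)}=1\}$. The fresh mass $m_i$ created at arrival $i$ is carried along the chain $i\to I(i)\to I^{(2)}(i)\to\cdots$, and at time $T_C$ it ends up inside block $1$ exactly when this chain passes through $C$, i.e.\ when $B_i^{(C)}=1$. Decomposing the mass of $B_1(T_C)$ by time of creation yields $f_1[1]=\sum_{i=1}^{C}B_i^{(C)}m_i$, which is \eqref{eq:dist_C}. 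Statement (i) is immediate from the conditional Bernoulli structure of the successive marks of $U_i$: $P(I(i)=i+k|(P_j))=P_{i+k}\prod_{l=i+1}^{i+k-1}(1-P_l)$.

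It remains to establish (ii) and the joint law \eqref{eq:jointdistbi}. The key observation is that the marks determining $I(i)$ (those of $U_i$ at arrivals in $(i,I(i)]$) and the marks determining $I(i')$ belong to distinct blocks: whenever $U_i$ and $U_{i'}$ coexist at some arrival they receive independent coin flips by construction, and if one has already been absorbed by then its later marks are not consulted. Hence, given $(P_j)$, the family $(I(i))_{i\in\nz}$ is independent with the law in (i), and integrating via $E[P_1]=\alpha$ gives $I(i)-i\sim\mathrm{Geo}(\alpha)$, which is (ii). Since $C$ uses only the coins of $\{1\}$, disjoint from those determining $(I(i))$, $C$ and $(B_i^{(k)})_{k\in\nz,i\in[k]}$ are conditionally independent given $(P_j)$. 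For $b$ with $b_C=1$ and $J=\{i\in[C]:b_i=1\}$, the orbit characterization rewrites $\{(B^{(C)}_1,\ldots,B^{(C)}_C)=b\}$ as $\bigcap_{i\in J\setminus\{C\}}\{I(i)\in J\}\cap\bigcap_{i\in[C-1]\setminus J}\{I(i)\notin J\}$, and the factorization \eqref{eq:jointdistbi} follows from the conditional independence of $(I(i))_{i<C}$. I expect the main obstacle to be the careful bookkeeping that justifies the disjointness of coin flips used by distinct $I(i)$'s together with the LLN-based passage from i.i.d.\ coin sequences to asymptotic frequencies of the dynamically evolving blocks.
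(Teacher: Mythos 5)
Your proposal is correct and follows essentially the same route as the paper: your sets $D_i$ are exactly the paper's singleton sets $\mathcal{S}_i$, and the orbit/absorption-time description via $I(i)$ together with the disjoint-coins argument for the conditional independence of $C$ and the $I(i)$'s given $(P_j)_{j\in\nz}$ is the paper's argument. The only point the paper treats that you omit is the edge case $\varLambda(\{1\})>0$ (where some $P_k=1$, all blocks merge at arrival $k$, and the objects $I(i)$, $B_i^{(j)}$ are only meaningful for indices up to $K=\min\{k:P_k=1\}$), which the paper resolves by noting $C\leq K$ almost surely so that \eqref{eq:dist_C} is unaffected.
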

\begin{remark}\leavevmode
\begin{itemize}
\item The distribution of $C$ is known from \cite[Proposition 3.1]{Gnedin2008}.
\item The distribution of $f_1[1]$ for Dirac coalescents with
$p>\frac{1}{2}$ has a structure somewhat similar to
the Cantor distribution, see e.g.~\cite{Lad1992} and \cite{Grabner1996}.
The Cantor distribution is the law of
$\sum_{i\in\nz}B_ipq^{i-1}$ for
$p\in(0,1)$, where $(B_i)_{i\in\nz}$ are
i.i.d.~Bernoulli variables with success probability $\frac{1}{2}$,
whereas in our case $(B_i)_{i\in\nz}$ are dependent
Bernoulli variables with success probabilities
$P(B_i=1)=P(\sum_{k\geq i} B^{(k)}_i 1_{\{C=k\}}=1)=pq^{i-1}+\sum_{k>i}p^2q^{k-1}
=pq^{i-1}(1+q)$, see Eq.~\eqref{eq:dist_C}. The Cantor distribution is
a shifted infinite Bernoulli convolution. Infinite Bernoulli
convolutions are the set of distributions of $\sum_{i\in\nz} \omega_i
(-1)^{B_i}$ with $\omega_i\in\rz$ for $i\in\nz$ satisfying $\sum_{i\in
\nz} \omega_i^2<\infty$, see \cite[Section~2]{peres2000sixty}. They
have been an active field of research since the 1930's, e.g. see \cite
{erdos1939family,solomyak1995random} and the survey \cite
{peres2000sixty}.
\end{itemize}
\end{remark}
Our main tool for the proofs is Schweinsberg's
Poisson construction of the $\varXi$-coalescent. The
article is organised as follows. We
recall (properties of) the Poisson construction in Section~\ref{sect:PoissC}. Section~\ref{sect:CDI}
characterises staying infinite for $\varXi$-coalescents with dust. These
prerequisites are then used to prove the results
for $\varXi$-coalescents with dust in Section~\ref{sect:dust}
and for simple $\varLambda$-coalescents in Section~\ref{sec:simple}.
\section{Poisson construction of a \texorpdfstring{$\varXi
$}{Xi}-coalescent and the block of 1}\label{sect:PoissC}
We recall the construction of a $\varXi$-$n$-coalescent $\varPi$ from
\cite{Schweinsberg2000}. We are only interested in constructing a $\varXi
$-coalescent with dust, which implies $\varXi(\{0\})=0$, see Eq. \eqref{def:dust}.

Let $\mathcal{P}$ be a Poisson point process on $A=[0,\infty)\times\nz
_0^{\infty}$ with intensity measure
\begin{equation}
\label{def:PPPintmeas} \nu=dt \otimes\int_\varDelta\otimes_{n\in\nz}
P^{(x)} \nu_0(dx),
\end{equation}
where, for $x\in\varDelta$, $P^{(x)}$ is a probability
measure on $\nz_0$ with $P^{(x)}(\{k\})=x_k$ and $P^{(x)}(\{0\})=1-|x|$
(Kingman's paintbox) and $\nu_0$ is defined as in Eq.~\eqref{def:dust}.
For $n\in\nz$, the restriction $\varPi^{(n)}$ of $\varPi$ to $[n]$ can be
constructed by starting at $t=0$ with each $i\in[n]$ in its own block.
Then, for each subsequent time $(T=)t$ with a Poisson point
$(T,(K_i)_{i\in\nz})$, merge all present blocks $i$ (at most $n$)
with identical $k_i>0$, where $i$ is the least element of the block
(there are only finitely many points of $\mathcal{P}$ that lead to a
merger of blocks in $[n]$). $\varPi$ is then pathwise defined by its
restrictions $(\varPi^{(n)})_{n\in\nz}$. From now on we will
assume without loss of generality that the $\varXi$-coalescent with dust
is constructed via the Poisson process $\mathcal{P}$.

The block of 1 can only merge at Poisson points $P=(T,(K_i)_{i\in\nz})$
with $K_1>0$. We take a closer look at these Poisson points. We
introduce exchangeable($Q$) indicators following \cite
[p.1884]{Pitman1999}: These are exchangeable Bernoulli variables which
are conditionally i.i.d.~given a random variable $X$ with distribution
$Q$ on $[0,1]$ which gives their success probability. Alternatively, we
denote these as exchangeable($X$) indicators if we can specify $X$.
\begin{lemma}\label{lem:PPPof1}
For any finite measure $\varXi$ on $\varDelta$ fulfilling \eqref{def:dust},
$\mathcal{P}$ splits into two independent Poisson processes\vadjust{\goodbreak}
\[
\mathcal{P}_1:=\bigl\{\bigl(T,(K_i)_{i\in\nz}
\bigr)\in\mathcal{P}: K_1>0\bigr\} \quad\mbox{and}\quad \mathcal{P}_2:=
\bigl\{\bigl(T,(K_i)_{i\in\nz}\bigr)\in\mathcal{P}:
K_1=0\bigr\}.
\]
$\mathcal{P}_1$ has almost surely finitely many points on any set
$[0,t]\times\nz_0^{\infty}$, thus we can order
\[
\mathcal{P}_1=\bigl(\bigl(T_j,\bigl(K^{(j)}_i
\bigr)_{i\in\nz}\bigr)\bigr)_{j\in\nz},
\]
where $T_j<T_{j+1}$ almost surely for $j\in\nz$.

$(T_j)_{j\in\nz}$ is a homogeneous Poisson process on $[0,\infty)$ with
intensity $\mu_{-1}$.

$((K^{(j)}_i)_{i\in\nz})_{j\in\nz}$ is an i.i.d.~sequence in $j$ and
$(1_{\{K^{(1)}_1=K^{(1)}_i\}})_{i\geq2}$ are exchange\-able($Q$)
indicators with
\[
Q:=\frac{1}{\mu_{-1}}\int_{\varDelta}\sum
_{i\in\nz}x_i\delta_{x_i}
\nu_0(dx),
\]
which is a probability measure on $[0,1]$. For $X\stackrel{d}{=}Q$, we
have $X>0$ almost surely and $E(X)=\gamma$.
\end{lemma}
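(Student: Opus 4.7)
The plan is to derive everything from the Poisson thinning theorem together with a size-biased reading of Kingman's paintbox. I would first split $\mathcal{P}$ according to whether $K_1>0$ or $K_1=0$; by standard thinning this produces two independent Poisson processes $\mathcal{P}_1,\mathcal{P}_2$, giving the first claim.

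Next, to compute the rate of $\mathcal{P}_1$, I would restrict the intensity measure $\nu$ from \eqref{def:PPPintmeas} to $\{K_1>0\}$. Since $P^{(x)}(K_1>0)=|x|$, the total intensity in time is $\int_\varDelta|x|\,\nu_0(dx)=\mu_{-1}<\infty$ by the dust hypothesis. This both yields the finite number of points on $[0,t]\times\nz_0^{\infty}$ and shows that the time projection forms a homogeneous Poisson process with rate $\mu_{-1}$. Because the restricted intensity retains product form in time and marks, the marks $(K^{(j)}_i)_{i\in\nz}$ at successive times are i.i.d.~in $j$, with common law equal to the normalised restricted mark law.

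The main step is identifying that common mark law as a size-biased paintbox. I would describe a single mark in two layers: first draw $(x,K^{(1)}_1)\in\varDelta\times\nz$ from the distribution with density $\mu_{-1}^{-1}x_{K^{(1)}_1}\,\nu_0(dx)$ against counting measure in $K^{(1)}_1$, and then sample $(K^{(1)}_i)_{i\ge 2}$ i.i.d.~from $P^{(x)}$. Setting $X:=x_{K^{(1)}_1}$, conditional on $(x,K^{(1)}_1)$ the indicators $1_{\{K^{(1)}_1=K^{(1)}_i\}}$, $i\ge 2$, are i.i.d.~Bernoulli$(X)$; since the conditional law depends on $(x,K^{(1)}_1)$ only through $X$, the indicators are exchangeable with mixing distribution equal to the law of $X$.

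It remains to compute this law, which by the change-of-variables above is $Q(B)=\mu_{-1}^{-1}\int_\varDelta\sum_{k\in\nz} x_k\,1_{\{x_k\in B\}}\,\nu_0(dx)$, and to check $X>0$ almost surely (only indices with $x_k>0$ contribute to the inner sum) and $E(X)=\mu_{-1}^{-1}\int_\varDelta(x,x)\,\nu_0(dx)=\varXi(\varDelta)/\mu_{-1}=\gamma$ using $(x,x)\nu_0(dx)=\varXi(dx)$. I expect the main subtlety to be the size-biasing: conditioning on $K_1>0$ does not produce a naive mixture of the $P^{(x)}$'s but the $|x|$-weighted mixture, and reading off the success probability as $X=x_{K^{(1)}_1}$ from the sampled pair is what yields $Q$ rather than any marginal of $\nu_0$.
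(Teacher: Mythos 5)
Your proposal is correct, and its first half (splitting by the restriction/thinning theorem, computing the time-intensity $\mu_{-1}$ via $P^{(x)}(K_1>0)=|x|$, and deducing the i.i.d.\ structure of the marks from the product form of the restricted intensity) coincides with the paper's argument step for step. Where you diverge is in the key step of identifying $(1_{\{K^{(1)}_1=K^{(1)}_i\}})_{i\geq2}$ as exchangeable($Q$) indicators. The paper verifies Pitman's moment criterion directly: it computes $P(\{i\in[m]:K^{(1)}_i=K^{(1)}_1\}=M)=\mu_{-1}^{-1}\int_\varDelta\sum_{j\in\nz}x_j^{|M|}(1-x_j)^{m-|M|}\nu_0(dx)$ from the explicit mark law and recognises this as $E(X^{|M|-1}(1-X)^{m-|M|})$ for $X\stackrel{d}{=}Q$. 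You instead exhibit the de Finetti mixing variable constructively: augment the mark by the latent paintbox $x$, observe that the joint law of $(x,K^{(1)}_1)$ under the normalised restricted intensity is the size-biased measure $\mu_{-1}^{-1}x_k\,\nu_0(dx)$, and note that given this pair the indicators are i.i.d.\ Bernoulli with success probability $X=x_{K^{(1)}_1}$, whose law is precisely $Q$. The two arguments are equivalent in content (your size-biased identity integrated against $x_j^{|M|-1}(1-x_j)^{m-|M|}$ reproduces the paper's computation), but yours makes the structural reason for the size-biasing explicit and gives $X>0$ a.s.\ for free from $x_k\delta_{x_k}(\{0\})=0$, whereas the paper's verification is shorter and stays entirely within Pitman's definition without introducing the latent variable. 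This interpretation of $Q$ as a size-biased pick is in fact recorded by the authors afterwards in Remark \ref{rem:PPP}, so your route essentially promotes that remark to the proof itself. Both yield $E(X)=\mu_{-1}^{-1}\int_\varDelta(x,x)\,\nu_0(dx)=\gamma$ identically.
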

\begin{proof}
$\mathcal{P}_1$ and $\mathcal{P}_2$
are obtained by restricting $\mathcal{P}$ on the
disjoint subsets $A_1:=[0,\infty)\times\nz\times\nz_0^\infty$ and
$A_2:=[0,\infty)\times\{0\} \times\nz_0^{\infty}$ of $A$. Thus,
$\mathcal{P}_1$ and $\mathcal{P}_2$ are independent Poisson processes
(restriction theorem \cite[p.17]{Kingman1993}) with intensity measures
$\nu_1=\nu(\cdot\cap A_1)$ and $\nu_2=\nu(\cdot\cap A_2)$.
For any Borel set $B\subseteq[0,\infty)$ and $\lambda$ being the
Lebesgue measure,
\begin{equation}
\label{eq:PPPprodmeas} \nu_1\bigl(B\times\nz_0^\infty
\bigr) =\lambda(B)\int_\varDelta\underbrace{P^{(x)}(
\nz)}_{=|x|}\prod_{i\ge
2}\underbrace{P^{(x)}(
\nz_0)}_{=1}\nu_0(dx)=\lambda(B)
\mu_{-1}.
\end{equation}
Thus, on any bounded set $B$, $\mathcal{P}_1$ has almost surely
finitely many points, which can be ordered as described.
Projecting $\mathcal{P}_1$ on the first coordinate $t$
of $A$ yields a Poisson process with intensity measure $\mu_{-1}dt$
(mapping theorem \cite[p.18]{Kingman1993}).

Now, we project the points of $\mathcal{P}_1$ on the coordinate of
$(K^{(j)}_i)_{i\in\nz}$. Recall the construction of a Poisson process
as a collection of i.i.d.~variables with distribution $(\mu(C))^{-1}\mu$
on sets of finite mass $C$ of the intensity measure $\mu$,
e.g.~\cite[p.23]{Kingman1993}. It shows that we can treat the
collection of $(T_j,(K^{(j)}_i)_{i\in\nz})$ with, for instance,
$T_i\in[k,k+1)$ for $k\in\nz$ as a random number of i.i.d.~random
variables with distribution $(1\cdot\mu_{-1})^{-1}\nu_1$. Since
$\nu_1$ has a product structure on $A_1$, we have that
$((K^{(j)}_i)_{i\in\nz})_{j\in\nz}$ are i.i.d.~in $j$ and have
distribution, for $m\in\nz$,
\begin{equation}
\label{eq:PPPkidist} P\bigl(\bigl(K_i^{(1)}=l_i
\bigr)_{i\in[m]}\bigr) =\frac{1}{\mu_{-1}}\int_\varDelta
\prod_{i\in[m]} P^{(x)}(l_i)
\nu_0(dx) =\frac{1}{\mu_{-1}}\int_\varDelta\prod
_{i\in[m]} x_{l_i}\nu_0(dx)
\end{equation}
for $l_1\in\nz$ and $l_2,\ldots,l_m\in\nz_0$. Consider
$(1_{\{K^{(1)}_1=K^{(1)}_i\}})_{i\geq2}$. To show that they
are exchangeable($Q$) indicators, \cite[Eq.~(27)]{Pitman1999} has
to be fulfilled, i.e.~we need to show
$P(\{i\in[m]:K^{(1)}_i=K^{(1)}_1\}=M)=E(X^{|M|-1}(1-X)^{m-|M|})$
for $X\stackrel{d}{=}Q$ and any $M\subseteq[m]$ with $1\in M$.
Using Eq.~\eqref{eq:PPPkidist} we compute
\begin{align*}
P\bigl(\bigl\{i\in[m]:K^{(1)}_i=K^{(1)}_1
\bigr\}=M\bigr)&= \sum_{j\in\nz} P\bigl(\bigl\{i\in
[m]:K^{(1)}_i=j\bigr\}=M,K_1^{(1)}=j
\bigr)
\\
&= \frac{1}{\mu_{-1}}\int_\varDelta\sum
_{j\in\nz
}x_j^{|M|}(1-x_j)^{m-|M|}
\nu_0(dx)\\
&=E\bigl(X^{|M|-1}(1-X)^{m-|M|}\bigr).
\end{align*}
Clearly, $P(X>0)=1$ since $\varXi(\{0\})=0$ and $E(X)=\mu_{-1}^{-1}
\int_\varDelta(x,x)\nu_0(dx)=\gamma$.
\end{proof}
\begin{remark}\label{rem:PPP}\
\begin{itemize}
\item The properties of the exchangeable($Q$) indicators remind of
\cite[Lemma 21, Theorem 4] {Pitman1999} and
\cite[Proposition 6]{Schweinsberg2000}. Restricting
$\mathcal{P}$ to points with $K_1=K_2>0$ we can reproduce their results
analogously to the proof of Lemma \ref{lem:PPPof1}.
\item$Q$ can be seen as the expected value of the random probability
measure $Q_x:=|x|^{-1}\sum_{i\in\nz}x_i\delta_{x_i}$ for $x\in\varDelta$
with $x$ drawn from $\mu_{-1}^{-1}|x|\nu_0(dx)$. In the Poisson
construction, this means we draw a "paintbox" $x\in\varDelta$ and then
record in which box the ball of $1$ falls, if we only allow it to fall
in boxes $1,2,\ldots$.
\item Consider a simple $\varLambda$-coalescent.
Projecting $\mathcal{P}_2$ on its first component,
so\break$(T,(K_i)_{i\in\nz})\mapsto T$, yields a homogeneous
Poisson process with intensity $\mu_{-2}-\mu_{-1}<\infty$. To see this,
proceed analogously as for $\mathcal{P}_1$. Then, Eq.~\eqref{eq:PPPprodmeas}
for $\nu_2$ reads the same except for replacing
$P^{(x)}(\nz)$ by $P^{(x)}(\{0\})=1-|x|$.
\end{itemize}
\end{remark}
For a $\varLambda$-coalescent (with $\varLambda(\{0\})=0$) the Poisson
construction simplifies, since $\varXi$ only has mass on
$\{x\in\varDelta:x_2=x_3=\cdots=0\}$ and thus $\mathcal{P}$ can be
seen as a Poisson process on $[0,\infty)\times\{0,1\}^{\infty}$
with intensity measure $dt \otimes\int_{[0,1]}\otimes_{n\in\nz}
P^{(x)} x^{-2}\varLambda(dx)$, where $P^{(x)}$ is the Bernoulli
distribution with success probability
$x\in(0,1]$.

\querymark{Q1}When constructing simple $\varLambda$-coalescents, even the
process $\mathcal{P}$ itself has almost surely finitely many points
$(T_j,(K_i^{(j)})_{i\in\nz})$ on any set $[0,t]\times\{0,1\}^{\infty}$
(which we can again order in the first coordinate). As described in
\cite[Example 19]{Pitman1999} and analogously to Lemma \ref
{lem:PPPof1}, we can construct each (potential) merger at point
$(T_j,(K_i^{(j)})_{j\in\nz})$ of a simple $\varLambda$-coalescent as
follows (while between jumps, we wait independent\break ${\rm Exp}(\mu_{-2})$
times). First choose $P_i\in(0,1]$ from $\mu_{-2}^{-1}x^{-2}\varLambda
(dx)$, we have $E(P_i)=\break\mu_{-2}^{-1}\int_{[0,1]}x^{-1}\varLambda(dx)=\alpha
$. Then, throw independent coins $(K_i^{(j)})_{i\in\nz}$ with
probability $P_i$ for `heads' (=1) for each block present and merge all
blocks whose coins came up `heads'. Again, $(P_i)_{i\in\nz}$ are
i.i.d.~and the `coins' $K_i^{(j)}$ are exchangeable$(P_i)$ indicators.
Analogously to above, we thus have
\begin{lemma}\label{lem:simplePPP}
Let $\varLambda$ be a finite measure on $[0,1]$ fulfilling \eqref
{def:simpleLambda}. For the Poisson process $\mathcal
{P}=(T_j,(K_i^{(j)})_{i\in\nz})_{j\in\nz}$, $((K_i^{(j)})_{i\in\nz
})_{j\in\nz}$ is an i.i.d.~sequence (in $j$) of sequences of
exchangeable($P_j$) indicators, where $(P_j)_{j\in\nz}$ are i.i.d.~with
$P_1\stackrel{d}{=}\mu_{-2}^{-1}x^{-2}\varLambda(dx)$. In particular,
$E(P_i)=\alpha$.
\end{lemma}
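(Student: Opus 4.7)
The plan is to mirror the proof of Lemma~\ref{lem:PPPof1}, but in the simpler setting where $\varXi$ is concentrated on $\{x\in\varDelta:x_2=x_3=\cdots=0\}$, so that $\mathcal{P}$ may be viewed as a Poisson point process on $[0,\infty)\times\{0,1\}^\infty$ with intensity $dt\otimes\int_{[0,1]}\otimes_{n\in\nz}P^{(x)}\,x^{-2}\varLambda(dx)$, where $P^{(x)}$ is Bernoulli$(x)$. Because $\mu_{-2}<\infty$, the total mass on $[0,t]\times\{0,1\}^\infty$ equals $t\mu_{-2}<\infty$, so $\mathcal{P}$ has almost surely finitely many atoms on each bounded slab and can be ordered by the time coordinate $T_1<T_2<\cdots$.

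Next, I would invoke the standard representation of a Poisson process on a set of finite intensity measure as an i.i.d.\ sample of fixed size (conditional on the count), as in \cite[p.~23]{Kingman1993}. This immediately yields that $((K_i^{(j)})_{i\in\nz})_{j\in\nz}$ is i.i.d.\ in $j$, with the common distribution of one atom being the normalisation of the $\{0,1\}^\infty$-marginal of the intensity, namely $\mu_{-2}^{-1}\int_{[0,1]}\otimes_{n\in\nz}P^{(x)}x^{-2}\varLambda(dx)$.

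To identify this marginal, I would write it as a two-stage sample: first draw $P\in(0,1]$ from $\mu_{-2}^{-1}x^{-2}\varLambda(dx)$, then, conditionally on $P$, let $(K_i^{(1)})_{i\in\nz}$ be i.i.d.\ Bernoulli$(P)$. This is exactly the defining property of a sequence of exchangeable$(P)$ indicators in the sense of \cite{Pitman1999}. Concretely, for any $M\subseteq[m]$ one computes
\[
P\bigl(\bigl\{i\in[m]:K^{(1)}_i=1\bigr\}=M\bigr)
=\mu_{-2}^{-1}\int_{[0,1]}x^{|M|}(1-x)^{m-|M|}x^{-2}\varLambda(dx)
=E\bigl(P_1^{|M|}(1-P_1)^{m-|M|}\bigr),
\]
which is \cite[Eq.~(27)]{Pitman1999} for the mixing law $\mu_{-2}^{-1}x^{-2}\varLambda(dx)$. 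The expectation $E(P_i)=\mu_{-2}^{-1}\int_{[0,1]}x\cdot x^{-2}\varLambda(dx)=\mu_{-1}/\mu_{-2}=\alpha$ follows by definition of $\alpha$ in \eqref{eq:alpha}.

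There is no real obstacle here: everything reduces to the restriction/mapping theorems for Poisson processes together with the observation that the intensity has the product structure $\int_{[0,1]}\otimes_n P^{(x)} x^{-2}\varLambda(dx)$, which is precisely the de Finetti-type mixture that characterises exchangeable$(P)$ indicators. The only point requiring a brief justification is that the ordering $T_1<T_2<\cdots$ is well defined and that the marks $(K_i^{(j)})_{i\in\nz}$ remain i.i.d.\ after ordering in time, which is standard for Poisson processes with a product intensity in time.
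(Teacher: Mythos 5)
Your proposal is correct and follows essentially the same route as the paper, which establishes this lemma in the paragraph preceding it: since $\mu_{-2}<\infty$ the process $\mathcal{P}$ has finitely many points on each bounded time slab, the points can be ordered in time, and the marks are i.i.d.\ draws from the normalised product intensity, giving the two-stage description (draw $P_j$ from $\mu_{-2}^{-1}x^{-2}\varLambda(dx)$, then toss i.i.d.\ Bernoulli$(P_j)$ coins) that is exactly the exchangeable$(P_j)$ structure. Your explicit verification of Pitman's condition and the computation $E(P_i)=\mu_{-1}/\mu_{-2}=\alpha$ match the paper's argument.
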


Since many proofs will build on the properties of different sets of
exchangeable indicators, we collect some well-known properties in the following

\begin{lemma}\label{lem:exch_ind}
Let $(K_i)_{i\in\nz}$ be exchangeable($X$) indicators.
\begin{itemize}
\item[a)] We have $\lim_{n\to\infty}\frac{1}{n}\sum_{i=1}^n K_i =X \mbox
{ almost surely}$. $X$ is almost surely unique.
\item[b)] If $(K_i)_{i\in\nz}$ is independent of a $\sigma$-field
$\mathcal{F}$, $X$ is, too.\vadjust{\goodbreak}
\item[c)] Let $(L_i)_{i\in\nz}$ be exchangeable($Y$) indicators,
independent of $(K_i)_{i\in\nz}$. Then,\break $(K_iL_i)_{i\in\nz}$ are
exchangeable($XY$) indicators and $X$, $Y$ are independent.
\end{itemize}
\end{lemma}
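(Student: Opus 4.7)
The whole lemma hinges on identifying the de~Finetti mixing variable $X$ with the almost sure limit of the empirical averages $n^{-1}\sum_{i=1}^n K_i$. Once (a) is in place, parts (b) and (c) follow from standard $\sigma$-field manipulations, since $X$ can then be realised (up to a null set) as a measurable functional of the sequence $(K_i)_{i\in\nz}$ alone.

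For (a), I would apply the classical strong law of large numbers under the conditional law. Given $X=x$, the $(K_i)_{i\in\nz}$ are i.i.d.~Bernoulli$(x)$, so $n^{-1}\sum_{i=1}^n K_i\to x$ almost surely; integrating against the law of $X$ yields the unconditional almost sure convergence to $X$. Uniqueness is then immediate: any further random variable $X'$ governing the $(K_i)$ as in the definition is forced to equal the same empirical limit, hence $X'=X$ almost surely.

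For (b), replace $X$ by $\widetilde X:=\limsup_n n^{-1}\sum_{i=1}^n K_i$, which equals $X$ almost surely by (a) and is $\sigma(K_1,K_2,\ldots)$-measurable; the independence of the sequence from $\mathcal{F}$ then transfers to $\widetilde X$, hence to $X$. For (c), the same device writes $X$ as a $\sigma(K_i)$-measurable limit and $Y$ as a $\sigma(L_i)$-measurable limit; since these two $\sigma$-fields are independent by hypothesis, the enlarged blocks $(X,(K_i)_{i\in\nz})$ and $(Y,(L_i)_{i\in\nz})$ are independent, whence $X\perp Y$ and, for any $U\in L^1(\sigma(K_i))$ and $V\in L^1(\sigma(L_i))$, one has $E[UV\mid X,Y]=E[U\mid X]\,E[V\mid Y]$. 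Applying this to products of functions of $K_i$ and $L_i$ shows that the pairs $(K_i,L_i)$ are conditionally independent given $(X,Y)$ with $K_i\sim\mathrm{Bern}(X)$ and $L_i\sim\mathrm{Bern}(Y)$ independently, so $K_iL_i\mid(X,Y)\sim\mathrm{Bern}(XY)$ and the products are conditionally independent across $i$. This is precisely the definition of exchangeable$(XY)$ indicators.

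The main obstacle, such as it is, is the mildly circular step in (c): part (a) is first used to upgrade the given independence of the two sequences to independence of $X$ and $Y$, and only with that in hand can the conditional factorization for the products be unlocked. Everything else is bookkeeping.
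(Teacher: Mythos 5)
Your proposal is correct and follows essentially the same route as the paper: de Finetti plus the strong law for part a), measurability of the empirical limit in $\sigma(K_1,K_2,\ldots)$ for part b), and independence of $X$ and $Y$ combined with conditional independence of the two sequences for part c). The only cosmetic difference is that in c) you verify the conditional i.i.d.\ Bernoulli$(XY)$ structure directly, whereas the paper checks the equivalent moment condition \cite[Eq.~27]{Pitman1999}; the underlying argument is the same.
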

\begin{proof}
These properties essentially follow from the de Finetti representation
of an infinite series of exchangeable variables as conditionally
i.i.d.~variables. The lemma is a collection of well-known properties as
e.g. described in \cite[Sections~2 and 3]{aldous1985exchangeability},
arguments of which we use in the following.

An infinite exchangeable sequence is conditionally i.i.d.~given an
almost surely unique random measure $\alpha$. This measure is the weak
limit of the empirical measures, in our case, $n^{-1}\sum_{i=1}^{n}\delta_{K_i}$, which has limit $X'\delta_{1}+(1-X')\delta_0$
for some random variable $X'$ with values in $[0,1]$. Given $\alpha$, the
indicators are $\alpha$-distributed. However, since $X$ gives the
success probability of each Bernoulli coin, we have $X=X'$ almost
surely, so $X$ is almost surely unique. The rest of a) is just the
strong law of large numbers e.g. from \cite
[2.24]{aldous1985exchangeability} ($E(K_1)\leq1$), the limit is $X'$.
Part b) follows from measure theory since the limit is measurable in
the $\sigma$-field spanned by the summed variables. For c), we again
check Pitman's condition \cite[Eq.~27]{Pitman1999}. Let $M\subseteq
[m]$. We have that $X$, $Y$ are independent from b). With
$P(K_i=L_i=1|X,Y)=XY$ almost surely,
\begin{align*}
P\bigl(\bigl\{i\in[m]: K_iL_i=1\bigr\}=M\bigr)&=E
\bigl(P\bigl(\bigl\{i\in[m]: K_iL_i=1\bigr\}=M|X,Y\bigr)
\bigr)
\\
&=E\bigl((XY)^{|M|}(1-XY)^{m-|M|}\bigr),
\end{align*}
since given $X$, $Y$, both $(K_i)_{i\in\nz}$ and $(L_i)_{i\in\nz}$ are
independent. This shows c).
\end{proof}

\section{When does a \texorpdfstring{$\varXi$}{Xi}-coalescent with dust
stay infinite?}\label{sect:CDI}
\querymark{Q2}A crucial assumption for our results is that the $\varXi
$-coalescent $\varPi$ has almost surely infinitely many blocks that may
merge in the mergers where 1 participates in. The property
\[
P(\varPi_t \mbox{ has infinitely many blocks}\ \forall\ t>0)=1
\]
is called staying infinite, while $P(\varPi_t \mbox{ has finitely many
blocks}\ \forall\ t>0)=1$ is the property of coming down from infinity.
These properties have been thoroughly discussed for $\varXi$-coalescents,
see e.g. \cite{Schweinsberg2000,limic2010speed} and \cite
{herriger2012conditions}.

We recall the condition for $\varXi$-coalescents with dust to stay infinite.
\begin{lemma}\label{lem:CDI}
Let $\varDelta_f:=\{x\in\varDelta: x_1+\cdots+x_k=1 \mbox{ for some } k\in\nz
\}$
and $\varXi$ be a finite measure on $\varDelta$ fulfilling Eq.~\eqref{def:dust}.
The $\varXi$-coalescent stays infinite if and only if $\varXi(\varDelta_f)=0$.
If $\varXi(\varDelta_f)>0$, then the $\varXi$-coalescent has infinitely many
blocks until the first jump of $f_1$ almost
surely and the $\varXi$-coalescent neither comes down from infinity nor stays
infinite.
\end{lemma}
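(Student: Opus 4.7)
The plan uses Schweinsberg's Poisson construction (Section~\ref{sect:PoissC}) and a dichotomy for a single Poisson point with paintbox $x$ acting on a partition with infinitely many blocks: (a) if $x\notin\varDelta_f$ then the SLLN applied to the conditionally i.i.d.~coins $(K_i)_{i\in\nz}\sim P^{(x)}$ leaves infinitely many blocks afterwards (when $|x|<1$ a positive fraction is painted with colour $0$ and untouched; when $|x|=1$ with infinitely many positive coordinates, infinitely many colours are represented); (b) if $x\in\varDelta_f$ with $x_1+\cdots+x_k=1$ the SLLN shows every one of the $k$ colours is attained, so exactly $k$ blocks remain.

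For $\varXi(\varDelta_f)=0$ almost surely no Poisson point has $x\in\varDelta_f$. Fixing $t>0$ and $i\in\nz$, the rate at which a Poisson point has $K_i>0$ equals $\int_\varDelta|x|\nu_0(dx)=\mu_{-1}$ independently of the coalescent history, so conditionally on the paintbox sequence $(x^{(j)})_j$ the indicators $1_{\{K_i^{(j)}=0\text{ for all }T_j\le t\}}$ are i.i.d.~in $i$ with common success probability $\prod_j(1-|x^{(j)}|)$. Since $\sum_{j:\,T_j\le t}|x^{(j)}|$ is a Poisson integral of mean $t\mu_{-1}<\infty$, this product is almost surely strictly positive when every $|x^{(j)}|<1$ in $[0,t]$, and an $i$-SLLN then exhibits an infinite family of singleton blocks of $\varPi_t$; on the complementary event that some $|x^{(j)}|=1$ occurs---necessarily with infinitely many positive coordinates, since $\varXi(\varDelta_f)=0$---part (a) of the dichotomy directly supplies infinitely many non-singleton blocks at that instant.

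For $\varXi(\varDelta_f)>0$ the bound $|x|=1$ with $(x,x)\le 1$ on $\varDelta_f$ gives $0<\varXi(\varDelta_f)\le\nu_0(\varDelta_f)\le\mu_{-1}<\infty$, so the thinned process of $\varDelta_f$-points has positive finite rate and its first occurrence $T^\ast$ is almost surely finite. Because $1-|x|=0$ on $\varDelta_f$, every $\varDelta_f$-point has $K_1>0$ almost surely and hence belongs to $\mathcal{P}_1$, so $T^\ast$ is at least the first jump time of $f_1$; before this first jump only $\mathcal{P}_2$-points (with $K_1=0$, hence $x\notin\varDelta_f$) occur, and the preceding paragraph's argument gives infinitely many blocks on this time interval---yielding the supplementary claim and, as the jump time is almost surely strictly positive, ruling out coming down from infinity. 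At $T^\ast$ the dichotomy's part (b) collapses the still-infinite partition to finitely many blocks, so staying infinite fails as well. The main technical obstacle is the accumulation phenomenon when $\nu_0(\varDelta)=\infty$, which I handle by working with the finite-rate restrictions $\varPi^{(n)}$ and passing to the monotone limit $\#\varPi_t=\lim_n\#\varPi_t^{(n)}$ so that the singleton-count estimate lifts cleanly to the full coalescent.
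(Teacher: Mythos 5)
Your argument for the direction $\varXi(\varDelta_f)>0$ matches the paper's (finitely many $\varDelta_f$-points per unit time, infinitude before the first one, collapse to finitely many blocks at it), and your dust-persistence computation for the case where every paintbox has $|x|<1$ is a valid, slightly more hands-on substitute for the paper's subordinator/Laplace-exponent argument. But there is a genuine gap in the claim that $\varXi(\varDelta_f)=0$ implies staying infinite. The problematic sub-case is $\nu_0(\{x:|x|=1\})>0$ while $\varXi(\varDelta_f)=0$, i.e.\ paintboxes of full mass with infinitely many positive parts occur at a positive (finite) rate; an example is $\nu_0=\delta_{(1/2,1/4,1/8,\ldots)}+\sum_{n\geq1}\delta_{(2^{-n},0,0,\ldots)}$, which satisfies \eqref{def:dust}. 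At the first such point the dust vanishes, so your singleton-count argument is dead for all later times, and your fallback --- ``part (a) of the dichotomy directly supplies infinitely many non-singleton blocks \emph{at that instant}'' --- only controls the block count at that one instant. What the lemma asserts is infinitude at \emph{every} $t>0$. Since $\nu_0(\varDelta)$ may be infinite, infinitely many Poisson points accumulate in every subsequent interval, so you cannot iterate the one-point dichotomy: the number of blocks at time $t$ is not obtained by applying finitely many mergers to a state you have already controlled, and a priori the process could ``come down from infinity'' gradually after the dust is gone without any single jump performing the collapse. Your closing remark about passing to the monotone limit $\#\varPi_t=\lim_n\#\varPi^{(n)}_t$ does not address this, because each $\#\varPi^{(n)}_t$ is finite and the limit gives no lower bound without an additional uniform-in-$n$ estimate.

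The paper closes exactly this hole by combining two facts: the dust argument shows the coalescent has infinitely many blocks up to the (possibly finite) time the dust vanishes, hence does not come down from infinity; and \cite[Lemma 31]{Schweinsberg2000} shows that when $\varXi(\varDelta_f)=0$ the coalescent almost surely either comes down from infinity or stays infinite, with no intermediate behaviour. You need this dichotomy (or an equivalent argument ruling out delayed collapse) to pass from ``infinitely many blocks for small $t$'' to ``infinitely many blocks for all $t$''. With that citation inserted, the rest of your proof goes through.
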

\begin{proof}
Let $\varDelta^*:=\{x\in\varDelta:|x|=1\}$. All $\varXi$-coalescents considered
are constructed via the Poisson construction with Poisson point process
$\mathcal{P}$.

First, assume $\varXi(\varDelta^*)=0$. We recall the (well-known) property
that for a $\varXi$-coalescent with dust $\varXi(\varDelta^*)=0$ is equivalent
to $P(S_t>0 \ \forall t)=1$, where $S_t$ is the asymptotic frequency of
the dust component. We use the remark on \cite
[p.1091]{freund2009number}: For $\varXi$-coalescents with dust, $(-\log
S_t)_{t\geq0}$ is a subordinator. The subordinator jumps to $\infty$
(corresponds to $S_t=0$) if and only if for its Laplace exponent $\varPhi
$, we have $\lim_{\eta\searrow0}\varPhi(\eta)>0$. For a $\varXi$-coalescent
with dust we have $\lim_{\eta\searrow0}\varPhi(\eta)=\int_{\varDelta^*} \nu
_0(dx)$. Hence, $\varXi(\varDelta^*)=0$ almost surely guarantees infinitely
many singleton blocks for all $t\geq0$, so the corresponding $\varXi$
coalescent stays infinite.

Now assume $\varXi(\varDelta^*)>0$. The subordinator $(-\log S_t)_{t\geq0}$
jumps from finite values ($S_t>0$) to $\infty$ ($S_t=0$) after an
exponential time with rate $\nu_0(\varDelta^*)$. This shows that the $\varXi
$-coalescent does not come down from infinity. Assume further that $\varXi
(\varDelta_f)=0$. Then, \cite[Lemma 31]{Schweinsberg2000} shows that the
$\varXi$-coalescent either comes down from infinity or stays infinite, so
it stays infinite.

Finally, assume $\varXi(\varDelta_f)>0$. Split $\mathcal{P}$ into independent
Poisson processes $\mathcal{P}'_1:=\{(T,(K_i)_{i\in\nz})\in\mathcal
{P}:\kappa\in\varDelta_f\}$ and $\mathcal{P}'_2:=\{(T,(K_i)_{i\in\nz})\in
\mathcal{P}:\kappa\notin\varDelta_f\}$, where $\kappa:=(\lim_{n\to\infty
}n^{-1}\sum_{i\in[n]} 1_{\{K_i=j\}})_{j\in\nz}$ (again restriction
theorem \cite[p.17]{Kingman1993}, Lemma \ref{lem:exch_ind} shows $\kappa
$ exists almost surely). Their intensity measures are defined as in
Eq.~\eqref{def:PPPintmeas}, but using $\nu'_1(\cdot):=\nu_0(\cdot\cap
\varDelta_f)$ and $\nu'_2:=\nu_0-\nu'_1$ instead of $\nu_0$. Since $\nu
'_1(\varDelta_f)\leq\mu_{-1}<\infty$, for any $t>0$ there are almost
surely finitely many $P\in\mathcal{P}'_1$ with $T<t$. Consider such
$P=(T,(K_i)_{i\in\nz})$ with $T$ smallest. Observe that until $T$, we
can construct the $\varXi$-coalescent using only the points of $\mathcal
{P}'_2$, which is the construction of a $\varXi'$-coalescent with $\varXi
'(dx):=(x,x)\nu'_2(dx)$. Since $\int_{\varDelta} |x|\nu'_2(dx)<\mu
_{-1}<\infty$ and $\varXi'(\varDelta_f)=0$, the proof steps above show that
the $\varXi$-coalescent has infinitely many blocks until $T$. Now consider
the merger at time $T$. The form of $\nu'_1$ ensures that $(K_i)_{i\in
\nz}$ can only take finitely many values, and Lemma \ref
{lem:exch_ind}a) ensures that infinitely many $K_i$'s show each value.
Thus, all blocks present before time $T$ are merged at $T$ into a
finite number of blocks (given by which $K_i$'s show the same number).
This shows that if $\varXi(\varDelta_f)>0$, the $\varXi$-coalescent stays
neither infinite nor comes down from infinity. Additionally, this shows
that either the block of 1 already merged at least once before $T$ or
it merges at $T$, thus there are infinitely many blocks before the
first merger of~1.
\end{proof}

\section{The block of 1 in \texorpdfstring{$\varXi$}{Xi}-coalescents with
dust -- proofs and remarks}\label{sect:dust}
\begin{proof}[Proof of Theorem \ref{thm:blockof1Xi}]
As in Lemma \ref{lem:PPPof1}, split the Poisson point process $\mathcal{P}$
used to construct the $\varXi$-coalescent in $\mathcal{P}_1$ and $\mathcal
{P}_2$. We also use the notation from Lemma \ref{lem:PPPof1} and its proof.
The block of 1 in the $\varXi$-$n$-coalescent for any $n\in\nz$ can only merge
at times $t$ for which there exists a Poisson point
$(T,(K_i)_{i\in\nz})\in\mathcal{P}_1$. Lemma \ref{lem:PPPof1} states
that the
set of times $T$ forms a homogeneous Poisson process with rate $\mu_{-1}$.
This shows that potential jump times are separated by countably many
independent ${\rm Exp}(\mu_{-1})$ random variables. Kingman's correspondence
yields that $f_1$ exists almost surely at each potential jump time.
To see this, observe that even though the partition of $\nz$ induced
by the Poisson construction is not exchangeable, the partition on
$\nz\setminus\{1\}$ is, and the asymptotic frequencies of the former
and the latter coincide. Since $f_1$ is by definition constant between
these jump points, $f_1$ has c\`adl\`ag paths almost surely. Since any
blocks change by mergers, $f_1$ is increasing.

The value of $f_1$ at 0 follows by definition. Since $\varPi$ stays
infinite (see Lemma \ref{lem:CDI}), at each $P\in\mathcal{P}_1$
infinitely many blocks can potentially merge. Lemma \ref{lem:PPPof1}
shows that the indicators of whether blocks present immediately before
$P$ merge with the block of 1 are exchangeable($X$) indicators with
$X>0$ almost surely. Then, Lemma \ref{lem:exch_ind} ensures that a
positive fraction of them almost surely does, causing $f_1$ to jump
(since a positive fraction of merging blocks has positive frequency).
Thus, every Poisson point leads to a merger almost surely, which shows
that $f_1$ jumps at all potential jump times described above. Since,
for all $t$, either $S_t>0$ or non-dust blocks not including 1 exist
(having asymptotical frequency $>0$), $f_1(t)<1$ for all $t\geq0$.

We consider the jump chain of $f_1$. Set $X_1:=f_1[1]$. Since $f_1[k]\in
(0,1)$ for all $k\in\nz$ and $f_1$ increases,
$f_1[k+1]=f_1[k]+(1-f_1[k])X_{k+1}$ for $X_{k+1}\in(0,1)$. Iterating
this yields $f_1[k]=\sum_{i=1}^{k}X_i\prod_{j=1}^{i-1}(1-X_j)$ for
$k\geq2$. The properties of $(X_k)_{k\in\nz}$ follow from the Poisson
construction and Lemma \ref{lem:PPPof1}. Consider the blocks present at
time $T_k-$, where the $k$th Poisson point of $\mathcal{P}_1$ is
$P_k=(T_k,(K^{(k)}_i)_{i\in\nz})$. The block with least element $i$
merges with the block of 1 if $K^{(k)}_i=K^{(k)}_1$. Consider the $k$th
Poisson point at time $T_k$. $X_k$ gives the fraction of the asymptotic
frequency of non-singleton blocks and singleton blocks at time $T_k-$,
i.e. the fraction of $1-f_1(T_{k-})$, that is merged with the block of
1 at $T_k$. Denote $L^{(k-)}_i:=1_{\left\{\mbox{$\{i\}$ is a block at
$T_k-$}\right\}}$. Then, recording the asymptotic frequencies of merged
non-singleton and singleton blocks,
\[
X_k=\frac{1}{1-f_1(T_k-)} \Biggl(\sum_{i\geq2}
1_{\{K^{(k)}_1=K^{(k)}_i\}
}f_i(T_k-)+\lim_{n\to\infty}
\frac{1}{n}\sum_{i=2}^{n}1_{\{
K^{(k)}_1=K^{(k)}_i\}}L^{(k-)}_i
\Biggr).
\]
Since by construction, $\varPi_{T_k-}\setminus\{1\}$ is an
exchangeable partition of $\nz\setminus\{1\}$, $(L^{(k-)}_i)_{i\in\nz}$
are exchangeable($S_{t-}$) indicators with $S_{t-}=1-\sum_{i=1}^\infty
f_i(T_k-)$. Recall that Lemma \ref{lem:PPPof1} tells us that $(1_{\{
K^{(k)}_1=K^{(k)}_i\}})_{i\geq2}$ are exchangeable($X'$) indicators
with $X'\stackrel{d}{=}Q$.
$(K^{(k)}_i)_{i\in\nz}$ is independent from $(\varPi_t)_{t<T_k}$, since
the Poisson points of $\mathcal{P}_1$ are i.i.d., so Lemma \ref
{lem:exch_ind} c) and a) show
\begin{equation}
\label{eq:Xk} X_k\stackrel{a.s.} {=} \sum
_{i\geq2} 1_{\{K^{(k)}_1=K^{(k)}_i\}}\frac
{f_i(T_k-)}{1-f_1(T_k-)}+X'
\frac{1-\sum_{i=1}^\infty f_i(T_k-)}{1-f_1(T_k-)}.
\end{equation}
The independence of $(K^{(k)}_i)_{i\in\nz}$ from $(\varPi_t)_{t<T_k}$ is
also crucial for the next two equations.
Compute, with $P(K^{(k)}_1=K^{(k)}_i)=E(X')=\gamma$ for $i\in\nz$,
\begin{align*}
E(X_k)&=\sum_{i\geq2}P\bigl(K^{(k)}_1=K^{(k)}_i
\bigr) E \biggl(\frac{f_i(T_k-)}{1-f_1(T_k-)} \biggr)+E\bigl(X'\bigr)E \biggl(
\frac{1-\sum_{i=1}^\infty f_i(T_k-)}{1-f_1(T_k-)} \biggr)
\\
&= \gamma E \biggl(\frac
{1-f_1(T_k-)}{1-f_1(T_k-)} \biggr)=\gamma.
\end{align*}
Analogously, for $l<k$, $X_l$ only depends on Poisson points $P_1,\ldots
,P_l$, so
\begin{align*}
E(X_kX_l)&= E \biggl( \sum
_{i\geq2} 1_{\{K^{(k)}_1=K^{(k)}_i\}}\frac
{f_i(T_k-)}{1-f_1(T_k-)}X_l+X'
\frac{1-\sum_{i=1}^\infty
f_i(T_k-)}{1-f_1(T_k-)}X_l \biggr)
\\
&=\gamma E \biggl(\frac
{1-f_1(T_k-)}{1-f_1(T_k-)}X_l \biggr)=
\gamma^2,
\end{align*}
showing that $X_k,X_l$ are uncorrelated.
An analogous computation shows that\break $E(\prod_{i\in\{l_1,\ldots,l_m\}
}X_{l_i})=\prod_{i\in\{ l_1,\ldots,l_m\}}E(X_{l_i})$ for distinct
$l_1,\ldots,l_m\in\nz$. With this,
\[
E\bigl(f_1[k]\bigr)=\sum_{i=1}^k
E(X_i)\prod_{j=1}^{i-1}
\bigl(1-E(X_j)\bigr)=\sum_{i=1}^k
\gamma(1-\gamma)^{i-1}=1-(1-\gamma)^k.
\]
To prove $\lim_{t\to\infty}f_1(t)=1$ almost surely,
observe that $f_1$ is bounded and increasing, thus $\lim_{t\to\infty}
f_1(t)$ exists.
Monotone convergence and $\lim_{t\to\infty}E(f_1(t))=\break\lim_{k\to\infty
}E(f_1[k])=1$ show the desired.
Note that $(X_k)_{k\in\nz}$ is in general
neither independent nor identically
distributed, see 
Section~\ref{example:Xk}.
\end{proof}


\begin{proof}[Proof of Corollary \ref{cor:mcs}]
By the Poisson construction the block
of 1 for $\varPi^{(n)}$ can only merge at times given by Poisson points in
$\mathcal{P}_1$. Consider $(T_1,(K^{(1)}_i)_{i\in\nz})\in\mathcal
{P}_1$. While $T_1$ is the time of the first jump of $f_1$ (see the
proof of Theorem \ref{thm:blockof1Xi}), there does not necessarily need
to be a merger of $\{1\}$ in the $n$-coalescent $\varPi^{(n)}$, if we have
$K^{(1)}_1\neq K^{(1)}_i$ for the least elements $i$ of all other
blocks of $\varPi^{(n)}$ immediately before $T_1$. However, Lemma \ref
{lem:PPPof1} shows that $(1_{\{K_1^{(1)}= K_i^{(1)}\}})_{i\geq2}$ are
exchangeable indicators. The mean $n^{-1}\sum_{i=2}^n 1_{\{K_1^{(1)}=
K_i^{(1)}\}}$, as argued in the proof of Theorem \ref{thm:blockof1Xi},
converges to an almost surely positive random variable for $n\to\infty
$. As shown in Lemma \ref{lem:CDI}, any $\varXi$-coalescent with dust has
infinitely many blocks almost surely before $T_1$. Thus, there exists
$N$, a random variable on $\nz$, so that 1 is also merging at time
$T_1$ in $\varPi^{(n)}$ for $n\geq N$ almost surely. This yields
$\lim_{n\to\infty}n^{-1}M_n=\lim_{n\to\infty}n^{-1}|B_1(T_1)\cap
[n]|=f_1(T_1)=f_1[1]$ almost surely. All further claims follow from
Theorem \ref{thm:blockof1Xi}.
\end{proof}
\begin{remarksingle}
Let $Q^{(n)}$ be the number of blocks merged at the first collision of
the block of 1 in a $\varLambda$-$n$-coalescent with dust. \cite
[1.4]{siri2016asymptotics} shows that $n^{-1}Q^{(n)}$ converges in
distribution. We argue that this convergence also holds in $L^p$ for
all $p>0$ and, for simple $\varLambda$-$n$-coalescents, almost surely.

The proof of Corollary \ref{cor:mcs} shows that $(T_1,(K^{(1)}_i)_{i\in
\nz})\in\mathcal{P}_1$ causes the first merger in the $n$-coalescent
for $n$ large enough (almost surely, but since $n^{-1}Q^{(n)}\in[0,1]$
for all $n$, convergence in $L^p$ is not affected by the null set
excluded). Split $Q^{(n)}$ into $Q^{(n)}_0$, the number of
non-singleton blocks and $Q^{(n)}_1$, the number of singleton blocks
merged at $T_1$. For the limit, we can ignore the non-singleton blocks
merged. To see this, recall $Q^{(n)}_0\leq K_n$, where $K_n$ is the
total number of mergers for the $\varLambda$-$n$-coalescent, since a
non-singleton block has to be the result of a merger. \cite[Lemma
4.1]{freund2009number} tells us that $n^{-1}K_n\to0$ in $L^1$ for $n\to
\infty$ for $\varXi$-coalescents with dust. This shows that the
$L^1$-limit of $n^{-1}Q^{(n)}$ is the same as of the one of
$n^{-1}Q^{(n)}_1$. $n^{-1}Q^{(n)}_1$ already appeared in the part of
the proof of Theorem \ref{thm:blockof1Xi} leading to Eq.~\eqref{eq:Xk},
its limit almost surely exists and equals $X'\frac{1-\sum_{i=1}^\infty
f_i(T_1-)}{1-f_1(T_1-)}$. Since $n^{-1}Q^{(n)}_1$ is bounded in
$[0,1]$, it also converges in $L^p$, $p>0$. So $n^{-1}Q^{(n)}$
converges in $L^1$. Since it is bounded in $[0,1]$ it also converges in
$L^p$, $p>0$. For simple $\varXi$-$n$-coalescents, \cite[Lemma
4.2]{Freund2012} shows $n^{-1}K_n\to0$ almost surely, so in this case
the steps above ensure also almost sure convergence of $n^{-1}Q^{(n)}$.
\end{remarksingle}

\section{The block of 1 in simple \texorpdfstring{$\varLambda
$}{Lambda}-coalescents -- proofs and remarks}\label{sec:simple}
\begin{proof}[Proof of Proposition \ref{prop:f1simple}]
Let $\mathcal{P}:=(P_i)_{i\in\nz}$ be the coin probabilities coming
from the Poisson process used to construct the simple $\varLambda
$-coalescent $\varPi$ as described in Section~\ref{sect:PoissC}. As shown
in the proof of Theorem \ref{thm:blockof1Xi}, the Poisson point
belonging to $P_C$ \querymark{Q3}where $1$ first throws `heads' in the
Poisson construction is the Poisson point where $f_1$ jumps for the
first time. We have $P(C=k|\mathcal{P})=P_k\prod_{i=1}^{k-1} (1-P_i)$.
Integrating the condition and using the independence of $(P_i)_{i\in\nz
}$ as well as $E(P_1)=\alpha$ (see Lemma~\ref{lem:simplePPP}), we see
that $C$ is geometrically distributed with parameter $\alpha$.

To describe $f_1[1]$ at the $C$th merger (Poisson point), recall that
the restriction $\varPi_{-1}$ of $\varPi$ to $\nz\setminus\{1\}$ has the
same asymptotic frequencies as $\varPi$. Thus, we can see $f_1[1]$ as the
asymptotic frequency of the newly formed block of $\varPi_{-1}$ at the
time of the Poisson point $P_C$. This follows since $\varPi_{-1}$ has
infinitely many blocks before (see Lemma \ref{lem:CDI}) and then, as in
the proof of Theorem \ref{thm:blockof1Xi}, there will be a newly formed
block of $\varPi_{-1}$ at the $C$th Poisson point (and the unrestricted
block in $\varPi$ includes 1).

We consider $\varPi_{-1}$ at the $k$th Poisson point with coin probability
$P_k$. For $\{i\}\in\nz\setminus\{1\}$ to remain a (singleton) block
and not be merged for the first $k-1$ mergers and then to be merged at
the $k$th, we need $\prod_{j\in[k-1]}(1-K^{(j)}_i)=1$ and
$K^{(k)}_i=1$. $(1_{\{\prod_{j\in[k-1]}(1-K^{(j)}_i)=1,K^{(k)}_i=1\}
})_{i\in\nz}$ are exchangeable($P_k\prod_{j\in[k-1]}(1-P_j)$)
indicators. Let
\[
\mathcal{S}_k= \biggl\{i\in\nz\setminus\{1\} : \prod
_{j\in
[k-1]}\bigl(1-K^{(j)}_i
\bigr)=1,K^{(k)}_i=1 \biggr\}
\]
be the set of $i\in\nz\setminus\{1\}$ whose first merger is the $k$th
overall merger. We call $\mathcal{S}_k$ the $k$th singleton set
(of $\varPi_{-1}$). From the strong law of large numbers for exchangeable
indicators, see Lemma \ref{lem:exch_ind}a), we directly have that
$\mathcal{S}_k$ has asymptotic frequency $P_k\prod_{j\in[k-1]}(1-P_j)$
almost surely.

Now, consider the asymptotic frequency $f^*[k]$ of the newly formed
block at the $k$th merger of $\varPi_{-1}$. By construction, there is only
one newly formed block at each merger. $\mathcal{S}_k$ is a part of the
newly formed block. Any other present block with more than two elements
(non-singleton block) is merged if and only if its indicator
$K^{(k)}_i=1$ (we order by least elements). For $k=1$, the newly formed
block is $\mathcal{S}_1$. For $k=2$, it is either $\mathcal{S}_2$ or
$\mathcal{S}_1\cup\mathcal{S}_2$, if the coin of the the block
$\mathcal{S}_1$ formed in the first merger comes up `heads'.

Applied successively, this shows that the newly formed block at the
$k$th merger consists of a union of a subset of the singleton sets
$(\mathcal{S}_{k'})_{k'<k}$ and the set $\mathcal{S}_k$. For its
asymptotic frequency, we have
\begin{equation}
\label{eq:newblock} f^*[k]=\sum_{i=1}^k
B^{(k)}_i P_i\prod
_{j\in[i-1]}(1-P_j)>0,
\end{equation}
where the $B^{(k)}_i$, $i\in[k]$, are non-independent Bernoulli
variables which are 1 if the $i$th singleton set $\mathcal{S}_i$ is a
part of the newly formed block at the $k$th merger of $\varPi_{-1}$.

If $\varLambda(\{1\})>0$, $P_k=1$ is possible. In this case, at the $k$th
Poisson point all remaining singletons form $\mathcal{S}_k$ and all
blocks present at merger $k-1$ merge with $\mathcal{S}_k$. There are no
mergers at Poisson points $P_{l}$, $l>k$, so we do not consider
Eq.~\eqref{eq:newblock} for $l>k$.

We have $f_1[1]=f^*[C]$. Given $\mathcal{P}$, $(f^*[k])_{k\in\nz}$ is
independent of $C$. Thus, Eq.~\eqref{eq:dist_C} is implied by Eq.~\eqref
{eq:newblock}.

Assume $\varLambda(\{1\})=0$. For $(B^{(k)}_i)_{k\in\nz,i\in[k]}$, we have
$B_k^{(k)}=1$ for all $k\in\nz$ since the $k$th singleton set is formed
at the $k$th Poisson point and is a part of the newly formed block. The
coins thrown at the $k$th Poisson point to decide whether other
singleton sets $\mathcal{S}_i$, $\mathcal{S}_{j}$ with $i,j<k$ are also
parts of the newly formed block are either independent given $\mathcal
{P}$ when they are in different blocks, or identical when they are in
the same block. The set $\mathcal{S}_i$ uses the coin of the block
newly formed at the $i$th merger. Let $I(i)$ be the Poisson point at
which this block merges again (and $\mathcal{S}_i$ with it). At the
$I(i)$th Poisson point and for all further Poisson points indexed with
$j\geq I(i)$, we have $B_i^{(j)}=B_{I(i)}^{(j)}$, since the singleton
sets $S_i$ and $S_{I(i)}$ are in the same block for mergers $j\geq I(i)$.

The property (i) of $I(i)$ in the proposition follow directly from its
definition as the minimum number of coin tosses until the first comes
up `heads'. The property (ii) is just integrating (i) and using that
$(P_i)_{i\in\nz}$ are i.i.d.~with $E(P_1)=\alpha$ (see Lemma \ref
{lem:simplePPP}), the conditional independence is the conditional
independence of coin tosses of distinct blocks from the Poisson
construction. To see Eq.~\eqref{eq:jointdistbi}, observe that $\mathcal
{S}_i$ for $i<j$ is a part of the newly formed block at the $j$th
merger of the $\varLambda$-coalescent ($i\in J$) if and only if $I(i)\in
J$. If $I(i)\in J$, either we have $I(i)=j$, so $\mathcal{S}_i$ is
merged for the first time after it has been formed at the $j$th merger,
or we have that $I(i)<j$ which means that it has already merged with at
least one other singleton set and that, as parts of the same block,
they both again merged at the $j$th merger. If $I(i)\notin J$, the
singleton set $\mathcal{S}_i$ neither merges at the $j$th merger for
the first time after being formed nor merges with any other singleton
set before that is then merging at the $j$th merger, so $\mathcal{S}_i$
is not a part of the newly merged block at the $j$th merger.

If $\varLambda(\{1\})=0$, the arguments hold true for all $j\in\nz$.
If $\varLambda(\{1\})>0$ this holds true for all $i,j\leq
K:=\min_{k\in\nz}\{P_k=1\}(<\infty\mbox{ almost surely})$, where
all singleton sets merge and $f^{*}[K]=1$. However, in this case
$C\leq K$, so we still can establish Eq.~\eqref{eq:dist_C}.
\end{proof}

\begin{remark}\label{remark1}\
\begin{itemize}
\item$(I(i))_{i\in\nz}$ is useful to construct the asymptotic
frequencies of the $\varLambda$-coales\-cent. Given $\mathcal{P}$, at the
$k$th merger, there are the singleton sets $(\mathcal{S}_j)_{j\in[k]}$
with almost sure frequencies $P_j\prod_{i\in[j-1]}(1-P_i)$ which were
already formed in the $k$ collisions, and unmerged singleton blocks
with frequency $\prod_{i\in[k]}(1-P_i)$. Using $(I(i))_{i\in[k]}$, we
can indicate which singleton sets form a block. $\mathcal{S}_i$ is a
single block if $I(i)>k$, if $I(i)\leq k$ it is a part of a block where
$\mathcal{S}_{I(i)}$ is also a part of. This can be seen as a discrete
version of the construction of the $\varLambda$-coalescent from the
process of singletons as described in \cite[Section~6.1]{gnedin2014lambda}
\item The variables $(I(i))_{i\in\nz}$ are useful to express other
quantities of the $\varLambda$-coa\-lescent. For instance, the number of
non-singleton blocks in a simple $\varLambda$-coa\-lescent at the $k$th
merger is given by $k-\sum_{i\in[k-1]} 1_{\{I(i)\leq k\}}$.
\end{itemize}
\end{remark}

To prove Proposition \ref{cor:Dirac}, we need the following result.\vadjust{\goodbreak}
\begin{lemma}\label{lem:unique}
For $p\in[\frac{1}{2},1)$, each $x\in\mathcal{M}_p$ from Eq.~\eqref
{eq:unique} has a unique representation in $\mathcal{M}_p$.
\end{lemma}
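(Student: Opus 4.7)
The plan is to reduce uniqueness to a purely arithmetic statement about finite $q$-ary expansions and then exploit the fact that $p \in [\tfrac12,1)$ forces $q \le \tfrac12 \le p$, which makes the ``leading term'' strictly larger than the entire remaining tail (once we restrict to finite supports).

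First, I would suppose that $x \in \mathcal{M}_p$ admits two representations
\[
x \;=\; \sum_{i \in \mathbb{N}} b_i p q^{i-1} \;=\; \sum_{i \in \mathbb{N}} b_i' p q^{i-1},
\]
with $b_i, b_i' \in \{0,1\}$ and both support sets finite. Dividing by $p > 0$ and setting $c_i := b_i - b_i' \in \{-1,0,1\}$, the question becomes whether $\sum_i c_i q^{i-1} = 0$ with only finitely many nonzero $c_i$ forces all $c_i = 0$.

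Assume for contradiction that some $c_i \neq 0$, and let $k := \min\{i : c_i \neq 0\}$ and $m := \max\{i : c_i \neq 0\}$; necessarily $k \le m$, and $k = m$ is immediately absurd since it gives $c_k q^{k-1} = 0$. So $k < m$. Isolating the leading term and applying the triangle inequality yields
\[
q^{k-1} \;=\; |c_k|\,q^{k-1} \;=\; \Bigl|\sum_{i=k+1}^{m} c_i q^{i-1}\Bigr| \;\le\; \sum_{i=k+1}^{m} q^{i-1} \;=\; \frac{q^k\,(1 - q^{m-k})}{1-q} \;=\; \frac{q^k(1 - q^{m-k})}{p}.
\]
Rearranging gives $p \le q(1 - q^{m-k}) < q$, which contradicts $p \ge \tfrac12 \ge q$. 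Hence $c_i \equiv 0$, i.e. $b_i = b_i'$ for every $i$.

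The only subtle point is that strict inequality $p \le 1/2 \le q$ could collapse to equality at $p = q = \tfrac12$; but the strict factor $(1 - q^{m-k}) < 1$, guaranteed by the finiteness of the support (so that $m < \infty$), is precisely what rescues the argument in that borderline case. No other obstacle arises: the proof is essentially the standard greedy/geometric-series uniqueness argument for $q$-ary expansions, adapted to $q \le \tfrac12$, and I expect it to read as a short self-contained paragraph.
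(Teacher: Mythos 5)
Your argument is correct and is essentially the paper's own proof: both isolate the smallest index where the two representations differ, bound the remaining terms by a geometric series, and derive the contradiction $p<q$ against $p\ge\frac12\ge q$, with the strictness coming from the finiteness of the supports. Your truncation at the largest index $m$ (giving the factor $1-q^{m-k}<1$) is just a slightly different way of securing the same strict inequality the paper obtains from the infinite tail.
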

\begin{proof} We adjust the proof of \cite[Theorem 7.11]{amann2005analysis}.
Assume that $x\in\mathcal{M}_p$ has two representations
$x=\sum_{i\in\nz}b_ipq^{i-1}=\sum_{i\in\nz}b'_ipq^{i-1}$ with
$b_i\neq b'_i$ for at least one $i$. Let $i_0$ be the smallest integer
with $b_{i_0}\neq b'_{i_0}$. Without restriction, assume
$b_{i_0}-b'_{i_0}=1$. Then,
\[
0=\sum_{i\in\nz} b_i pq^{i-1}-\sum
_{i\in\nz}b'_ipq^{i-1}
=pq^{i_0-1}+\sum_{i>i_0}\bigl(b_i-b'_i
\bigr)pq^{i-1}.
\]
Thus, $pq^{i_0-1}=\sum_{i>i_0}(b'_i-b_i)pq^{i-1}<\sum_{i>i_0}pq^{i-1}
=pq^{i_0}$, simplifying to $p<q$, in contradiction to the assumption
$p\geq\frac{1}{2}$.
\end{proof}

\begin{proof}[Proof of Proposition \ref{cor:Dirac}]
From Eq.~\eqref{eq:newblock} we see that $f_1$ only takes values in
$\mathcal{M}_p$, since $P_k=p$ for all $k\in\nz$ and $C<\infty$ almost surely.
Recall the definition of the singleton sets $\mathcal{S}_i$ and their
properties from the proof of Proposition \ref{prop:f1simple}. The
asymptotic frequency of $\mathcal{S}_i$ is $pq^{i-1}$ almost surely.
Lemma \ref{lem:unique} ensures that there is a unique representation
$f_1[l]=x=\sum_{i=1}^\infty b_ipq^{i-1}$ in $\mathcal{M}_p$, let $J:=\{
i\in\nz:b_i=1\}$ and $j:=\max J$. This means that $f_1[l]=x$ is
equivalent to that the block of 1 at its $l$th jump consists of the
union of all $\mathcal{S}_i$ with $i\in J$ and 1. This also shows that
the $l$th jump of $f_1$ is at the $j$th jump of the Dirac coalescent,
since if $f_1$ jumps at the $k$th merger of the Dirac coalescent, the
newly formed block includes $\mathcal{S}_k$.

Since $P_i=p$ for all $i\in\nz$, we have $\alpha=p$ and Eq.~\eqref
{eq:dist_C} simplifies to
$f_1[1]=\sum_{i=1}^C B_i^{(C)} pq^{i-1}$, where $C\stackrel
{d}{=}Geo(p)$ is independent from $(B_i^{(k)})_{k\in\nz,i\in[k]}$. The
latter fulfil
\begin{equation}
\label{eq:distf1_1_b_i} P\bigl(B_i^{(j)}=b_i\ \forall\
i\in[j-1]\bigr)= \prod_{i\in J\setminus\{j\}
}P(Y+i\in J)\prod
_{i\in[j]\setminus J} P(Y+i\notin J)
\end{equation}
with $Y\stackrel{d}{=}Geo(p)$, since the joint distribution in
Proposition \ref{prop:f1simple} again simplifies, we can ignore the
conditioning and
$I(i)-i\stackrel{d}{=}Geo(p)$ for all $i\in\nz$.

Since $f_1[1]=x$ uniquely determines the values of $C$ and
$(B_i^{(C)})_{i\in[C]}$, we have
\begin{equation}
\label{eq:h1} P\bigl(f_1[1]=x\bigr)=P(C=j)P\bigl(B_1^{(j)}=b_1,
\ldots,B_{j-1}^{(j)}=b_{j-1}\bigr),
\end{equation}
which shows Eq.~\eqref{eq:distXDirac} when we insert the distributions
expressed in terms of their geometric distributions.

In order to verify that the jump chain $(f_1[i])_{i\in\nz}$ is
Markovian, we
show that $f_1[1],\ldots,f_1[l]$ does not contain
more information on $f_1[l+1]$ than $f_1[l]$ does. Without restriction,
assume that the $l$th jump $f_1[l]$ of $f_1$ takes place at the $k$th
jump of the Dirac coalescent. Then, $f_1[l+1]$ is constructed from the
blocks present after the $k$th merger. For each subsequent Poisson
point $P_{k+1},\ldots$, blocks present are merged if their respective
coins come up `heads' until (and including), at $P_{k'}$, the coin of
the block of 1 comes up `heads' for the first time since $P_k$. Thus,
only information about the block partition at merger $k$ can change the
law of the next jump. $f_1[l]=x$ gives the information which singleton
sets $\mathcal{S}_1,\ldots,\mathcal{S}_k$ are parts of the block of 1
at merger $k$ of $\varPi$ and which are not. $f_1[l]=x$ contains no
information about how the other singleton sets, $\mathcal{S}_i$ with
$b_i=0$, are merged into blocks at collisions before $k$ apart from
that it tells us that $B^{(j)}_i=0$ for $j\in J$ and $i\notin J$, which
means that all $\mathcal{S}_i$ with $i\notin J$ did not merge at the
$j$th collisions, $j\in J$. This is due to that any $\mathcal{S}_i$
with $B^{(j)}_i=1$ would merge with the newly formed block at merger
$j$ and thus would be in a block with $\mathcal{S}_j$ and also in the
block of $1$ at merger $k$. However, analogously we see that knowing
$f_1[1],\ldots,f_1[l]$ does not give any additional information about
the block structure at the $k$th merger, but only how the set of
$\mathcal{S}_i$ which are in the block of 1 at merger $k$ behaved at
the earlier mergers $J$. Thus, $(f_1[l])_{l\in\nz}$ is Markovian.
However, $(f_1(t))_{t\geq0}$ is
not Markovian. In order to see this consider, for $0<t_0<t_1<t_2$,
\begin{align*}
p(t_2,t_1,t_0):=& P\bigl(f_1(t_2)=p+pq^2
\vert f_1(t_1)=p,f_1(t_0)=0\bigr)
\\
=& \frac{P(f_1(t_2)=p+pq^2, f_1(t_1)=p,f_1(t_0)=0)}{P(f_1(t_1)=p,f_1(t_0)=0)}.
\end{align*}
We will show that $p(t_2,t_1,t_0)$ depends on $t_0$, which shows that
$f_1$ is not Markovian.

We can express all events in terms of the independent waiting times for
Poisson points, i.e. the successive differences between the first
component $T$ of the Poisson points $(T,(K_i)_{i\in\nz})\in\mathcal
{P}$. Here, we use the split of the Poisson points into the independent
Poisson point processes $\mathcal{P}_1$ and $\mathcal{P}_2$ from Lemma
\ref{lem:PPPof1}. \querymark{Q4}The waiting times between points in
$\mathcal{P}_1$ are ${\rm Exp}(\mu_{-1})$-distributed, the waiting times between points in
$\mathcal{P}_2$ are ${\rm Exp}(\mu_{-2}-\mu_{-1})$-distributed, see Lemma \ref{lem:PPPof1} and
Remark \ref{rem:PPP}. We will relabel $\tau=\mu_{-1}$ and $\rho=\mu
_{-2}-\mu_{-1}$ for a clearer type face. Let $T_1,T_2,\ldots$ be the
waiting times between points in $\mathcal{P}_1$ and $T'_1,T'_2,\ldots$
be the waiting times between points in $\mathcal{P}_2$. All waiting
times are independent one from another. We recall that for $T\stackrel
{d}{=}{\rm Exp}(\alpha)$, $P(T>a)=e^{-\alpha a}$ and $P(T\in
(a,a+b])=e^{-\alpha a}(1-e^{-\alpha b})$ for $a,b\geq0$.

The event $\{f_1(t_1)=p,f_1(t_0)=0\}$ means that the first jump of
$f_1$ adds the singleton set $\mathcal{S}_1$ at a time in $(t_0,t_1]$.
Thus, there has to be only a single point of $\mathcal{P}_1$ with first
component $T_1\leq t_1$ and the smallest time $T'_1$ of points of
$\mathcal{P}_2$ has to be greater than $T_1$. We compute, conditioning
on $T_1$ for the third equation,
\begin{align*}
P\bigl(f_1(t_1)=p,f_1(t_0)=0
\bigr)& =P\bigl(t_0<T_1\le t_1<T_1+T_2,T_1<T_1'
\bigr) 
\\
&= \int_{t_0}^{t_1}P(T_2>t_1-x)P
\bigl(T'_1>x\bigr)\tau e^{-\tau x}dx \\
&= \int
_{t_0}^{t_1} e^{-\tau(t_1-x)}e^{-\rho x}\tau
e^{-\tau x}dx
\\
&= \frac
{\tau}{\rho}e^{-\tau t_1}\int_{t_0}^{t_1}
\rho e^{-\rho x}dx = \frac
{\tau}{\rho}e^{-\tau t_1}\bigl(e^{-\rho t_0}-e^{-\rho t_1}
\bigr).
\end{align*}
Analogously, we compute (by conditioning on $T_1,T_2$ for the second equality)
\begin{align*}
& P\bigl(f_1(t_2)=p+pq^2,
f_1(t_1)=p,f_1(t_0)=0\bigr)
\\
&\quad= P\bigl(t_0<T_1\le
t_1<T_1+T_2\le t_2<T_1+T_2+T_3,T_1<T_1'
\le T_1+T_2<T_2'\bigr)
\\
&\quad= \int_{t_0}^{t_1} \int_{t_1-x}^{t_2-x}
P\Biggl(T_3\,{>}\,t_2\,{-}\,x\,{-}\,y,T'_1\,{\in}\,
\bigl(x,x\,{+}\,y],T'_2\,{>}\,x\,{+}\,y\bigr) \tau^2
e^{-\tau x}e^{-\tau y}dy\ dx
\\
&\quad= \tau^2\int_{t_0}^{t_1} \int
_{t_1-x}^{t_2-x} e^{-\tau
(t_2-x-y)}e^{-\rho x}
\bigl(1-e^{-\rho y}\bigr)e^{-\rho(x+y)}e^{-\tau x}e^{-\tau
y}dy\
dx
\\
&\quad= \frac{\tau^2}{\rho}e^{-\tau t_2} \biggl[\rho^{-1}
\bigl(e^{-\rho
t_1}\,{-}\,e^{-\rho t_2}\bigr) \bigl(e^{-\rho t_0}\,{-}\,e^{-\rho t_1}
\bigr)\Biggr)\,{-}\,\frac{1}{2}\bigl(e^{-2\rho
t_1}\,{-}\,e^{-2\rho t_2}\bigr)
(t_1-t_0) \biggr].
\end{align*}

Taking the ratio shows that
\begin{align*}
p(t_2,t_1,t_0)= \frac{\tau}{\rho}
e^{-\tau(t_2-t_1)}\bigl(e^{-\rho
t_1}-e^{-\rho t_2}\bigr)-\underbrace{
\frac{\tau}{2}e^{-\tau(t_2-t_1)}\frac
{e^{-2\rho t_1}-e^{-2\rho t_2}}{e^{-\rho t_0}-e^{-\rho t_1}}}_{\neq0}(t_1-t_0)
\end{align*}
depends on $t_0$, so $f_1$ is not Markovian.
\end{proof}
\begin{remarksingle}
Our proof of Proposition \ref{cor:Dirac} relies on the unique
representation in $\mathcal{M}_p$. This means that it also holds true
for all $p\in(0,2^{-1})$ where each $x\in\mathcal{M}_p$ has a unique
representation in $\mathcal{M}_p$, e.g. for all transcendental $p$.
If the representation is not unique, Eq. \eqref{eq:distf1_1_b_i} is
still correct, but the right side of Eq. \eqref{eq:h1} does not show
$P(f_1[1]=x)$. Instead, the latter shows the contribution to
$P(f_1[1]=x)$ from the paths of $f_1$ which fulfil
$C=j,B_1^{(j)}=b_1,\ldots,B_{j-1}^{(j)}=b_{j-1}$ (recall that
$j,b_{1},\ldots,b_{j-1}$ depend on the representation of $x$).
Moreover, $P(f_1[1]=x)$ then is the sum over
$P(C=j)P(B_1^{(j)}=b_1,\ldots,B_{j-1}^{(j)}=b_{j-1})$ for the tuples
$j,b_{1},\ldots,b_{j-1}$ coming from the different representations of
$x$ (the sets of paths are disjoint if the parameter sets $(j,b_1,\ldots
,b_{j-1})$ differ).
Since the proof of our results on the Markov property of both $f_1$
and its jump chain also rely on the unique representation of $x$ (to
read off which blocks merged when), the proof does not extend if $p$
does not allow a unique representation of $x$.
\end{remarksingle}
\section{Example}\label{example:Xk}
We provide a concrete example showing that the random variables
$(X_k)_{k\in\nz}$ from Theorem \ref{thm:blockof1Xi} are, in general,
neither independent nor identically distributed.

Choose $\varLambda=\delta_\frac{1}{2}$ and consider $f_1$ in the
corresponding $\varLambda$-coalescent. Recall that $f_1[l]=x\in\mathcal
{M}_{\frac{1}{2}}$ already fixes which singleton sets $\mathcal{S}_k$
are parts of the block of 1 at its $l$th merger and which are not.
First, assume
$f_1[1]=X_1=\frac{5}{8}=\frac{1}{2}+\frac{1}{2^3}\in\mathcal{M}_{\frac{1}{2}}$,
which means that the coin of 1 comes up `heads' for the first time
at the third Poisson point and the block of 1 is
$\mathcal{S}_1\cup\mathcal{S}_3$, while $\mathcal{S}_2$ is a
block of its own (an event happening with probability $>0$).
Assume further $f_1[2]=\frac{11}{16}=\frac{5}{8}+\frac{1}{16}$.
This sets
$X_2=(f_1[2]-f_1[1])/(1-X_1)=\frac{1}{6}\notin\mathcal{M}_{\frac{1}{2}}$.
We read off that the coin of the block of 1 also comes up `heads'
at the fourth collision, where the block of 1 merges with
$\mathcal{S}_4$. We also see that the coin of the only other block
$\mathcal{S}_2$ comes up `tails'. We thus have, since we throw
fair coins, $P(X_2=\frac{1}{6}\vert
X_1=\frac{5}{8})=P(f_1[2]=\frac{11}{16}\vert
f_1[1]=\frac{5}{8})=\frac{1}{4}$. Since
$X_1=f_1[1]\in\mathcal{M}_{\frac{1}{2}}$ for any realisation,
$X_1$ and $X_2$ have different distributions. To see also
non-independence, consider $f_1[1]=X_1=\frac{1}{2}$ (coin of 1
comes up `heads' at first coin toss, block of 1 is
$\mathcal{S}_1$, occurs with probability $\frac{1}{2}$). In this
case $P(X_2=\frac{1}{6}|X_1=\frac{1}{2})=0$, since
$f_1[2]=X_1+(1-X_1)X_2=\frac{7}{12}\notin\mathcal{M}_{\frac{1}{2}}$.

\begin{acknowledgement}[title={Acknowledgements}]
Jason Schweinsberg and an anonymous reviewer pointed
out that $f_1$ is not Markovian for Dirac coalescents, the latter also
stated the uniqueness condition in Lemma \ref{lem:unique}. Matthias
Birkner remarked that the uniqueness in Lemma \ref{lem:unique} extends
to transcendental $p$. We thank them and all reviewers for their
constructive comments leading to an improvement of the manuscript.

F. Freund was funded by the grant FR 3633/2-1 of the German Research
Foundation (DFG) within the priority program 1590 ``Probabilistic
Structures in Evolution''.
\end{acknowledgement}

%

\end{document}